\title{\bf Some Quantitative Characterizations of Certain Symplectic Groups}
\author{ {\bf M. Akbari} and {\bf A. R. Moghaddamfar}\\[0.2cm]
{\em Department of Mathematics, K. N. Toosi
University of Technology,}\\
 {\em P. O. Box $16315$-$1618$, Tehran, Iran}\\[0.1cm]
{\em E-mails}: {\tt moghadam@kntu.ac.ir} and {\tt
moghadam@ipm.ir}}
\newenvironment{proof}{\noindent {\em {Proof}}.}{$\square$
\medskip}
\newtheorem{definition}{Definition}
\newtheorem{corollary}{Corollary}
\newtheorem{theorem}{Theorem}
\newtheorem{lm}{Lemma}
\newtheorem{problem}{Problem}
\newtheorem{step}{Step}
\begin{document}
\maketitle
\begin{abstract}
\noindent Given a finite group $G$, denote by ${\rm D}(G)$ the
degree pattern of $G$ and by ${\rm OC}(G)$ the set of all order
components of $G$. Denote by $h_{{\rm OD}}(G)$ (resp. $h_{{\rm
OC}}(G)$) the number of isomorphism classes of finite groups $H$
satisfying conditions $|H|=|G|$ and ${\rm D}(H)={\rm D}(G)$
(resp. ${\rm OC}(H)={\rm OC}(G)$). A finite group $G$ is called
OD-characterizable (resp. OC-characterizable) if $h_{\rm
OD}(G)=1$ (resp. $h_{\rm OC}(G)=1$). Let $C=C_p(2)$ be a
symplectic group over binary field, for which $2^p-1>7$ is a
Mersenne prime. The aim of this article is to
prove that $h_{\rm OD}(C)=1=h_{\rm OC}(C)$.\\[0.3cm]
{\bf Keywords}: spectrum of a group, prime graph, degree pattern,
order component, symplectic group $C_n(q)$,
OD(OC)-characterizability of a finite group.
\end{abstract}
\def\thefootnote{ \ }
\footnotetext{{\em $2000$ Mathematics Subject Classification}:
20D05, 20D06, 20D08.}
\thanks{}
\section{Introduction}
Only finite groups will be considered. Let $G$ be a group,
$\pi(G)$ the set of all prime divisors of its order and
$\omega(G)$ be the spectrum of $G$, that is the set of its
element orders. The {\em prime graph} ${\rm GK}(G)$ (or {\em
Gruenberg-Kegel graph}) of $G$ is a simple graph whose vertex set
is $\pi(G)$ and two distinct vertices $p$ and $q$ are joined by
an edge (and we write $p\sim q$) if and only if $pq\in
\omega(G)$. Let $t(G)$ be the number of connected components of
${\rm GK}(G)$. The $i$th connected component is denoted by
$\pi_i(G)$ for each $i=1, 2, \ldots, t(G)$. In the case when
$2\in \pi(G)$, we assume that $2\in \pi_1(G)$.

The classification of simple groups with disconnected prime graph
was obtained by Williams \cite{Wil} and Kondrat\'ev \cite{k}.
Moreover, a corrected list of these groups can be found in
\cite{km}. Recall that a {\em clique} in a graph is a set of
pairwise adjacent vertices. Note that for all non-abelian simple
groups $S$ with disconnected prime graph, all connected components
$\pi_i(S)$ for $2\leq i\leq t(S)$ are clique, for instance, see
\cite{k}, \cite{suz} and \cite{Wil}.

The {\em degree} $\deg_G(p)$ of a vertex $p\in \pi(G)$ in ${\rm
GK}(G)$ is the number of edges incident on $p$. If $\pi(G)=\{p_1,
p_2, \ldots, p_h\}$ with $p_1<p_2<\cdots<p_h$, then we define
\[ {\rm D}(G):=\big(\deg_G(p_1), \deg_G(p_2), \ldots, \deg_G(p_h)\big), \]
which is called the {\em degree pattern of $G$}.

Given a group $G$, denote by $h_{\rm OD}(G)$ the number of
isomorphism classes of groups with the same order and degree
pattern as $G$. For instance, if $G$ is a cyclic group of order
$p$, where $p$ is a prime number, then $h_{\rm OD}(G)=1$. Notice
that the cyclic group $\mathbb{Z}_p$ is the only group of order
$p$. Similarly, in the case when $G$ is a $p$-group of order
$p^2$, $h_{\rm OD}(G)=2$, in fact, the non-isomorphic groups
$\mathbb{Z}_{p^2}$ and $\mathbb{Z}_p\times \mathbb{Z}_p$ are the
only groups with the same order and degree pattern. All finite
groups, in terms of the function $h_{\rm OD}(\cdot)$, are
classified as follows:
\begin{definition} A group $G$ is called  $k$-fold
OD-characterizable if $h_{\rm OD}(G)=k$. Usually, a $1$-fold
OD-characterizable group is simply called OD-characterizable.
\end{definition}

There are scattered results in the literature showing that
certain simple groups are $k$-fold OD-characterizable for $k\in
\{1, 2\}$ (see Table 1). In this article, we will focus our
attention on the symplectic groups $C_p(2)\cong S_{2p}(2)$, where
$p$ is an odd prime. Recall that $C_2(2)$ is not a simple group,
in fact, the derived subgroup $C_2(2)'$ is a simple group which is
isomorphic with $A_6\cong L_2(9)$. In addition, we recall that
$B_2(3)\cong {^2\!A}_4(2^2)$, $B_n(2^m)\cong C_n(2^m)$ and
$B_2(q)\cong C_2(q)$ (see \cite{carter}). Previously, it was
determined the values of $h_{OD}(\cdot)$ for some sympelectic and
orthogonal groups (see \cite{akbarim, moghadam}):

\begin{center}
$\begin{array}{llllc} \hline \ \ \ \ \ \ \ \ G &&& \mbox{Restrictions on} \ G & h_{OD}(G) \\
\hline
B_3(4)\cong C_3(4) & &&& 1\\[0.2cm]
B_2(q)\cong C_2(q)&& & |\pi(\frac{q^2+1}{(2, q-1)})|=1 &
1\\[0.2cm]
B_{2^m}(q)\cong C_{2^m}(q)&& & |\pi(\frac{q^2+1}{(2,
q-1)})|=1, \ \ q \ \mbox{is even}  & 1\\[0.2cm]
B_3(5), \ C_3(5),&& & & 2 \\[0.2cm]
B_n(q), \ C_n(q),&& &  n=2^m>2, \ \  |\pi(\frac{q^n+1}{2})|=1, \ \ q \ \mbox{is an odd prime power}& 2\\[0.3cm]
B_p(3), \ C_p(3),&& & |\pi(\frac{3^p-1}{2})|=1, \ \ \ \ p \
\mbox{is an odd prime,} \ \ \ & 2 \\
\hline
\end{array}$
\end{center}

If $n$ is a natural number, then $\pi(n)$ denotes the set of all
prime divisors of $n$. Given a group $G$, the order of $G$ can be
expressed as a product of some co-prime natural numbers $m_i(G)$,
$i=1, 2, \ldots, t(G)$, with $\pi(m_i(G))=\pi_i(G)$. The numbers
$m_1(G), m_2(G), \ldots, m_{t(G)}(G)$ are called the {\it order
components of} $G$. We set $${\rm OC}(G):=\{m_1(G), m_2(G),
\ldots, m_{t(G)}(G)\}.$$ A list of order components of simple
groups with disconnected prime graphs can be found in
\cite[Tables 1-4 ]{c1}. In the similar manner, we define $h_{\rm
OC}(G)$ as the number of isomorphism classes of finite groups with
the same set ${\rm OC}(G)$ of order components. Again, in terms of
function $h_{\rm OC}(\cdot)$, the groups $G$ are classified as
follows:
\begin{definition} A finite group $G$ is called $k$-fold
OC-characterizable if $h_{\rm OC}(G)=k$. In the case when $k=1$
the group $G$ is simply called OC-characterizable.
\end{definition}

It is worth mentioning that the characterization of finite groups
through their order components was first introduced by Chen in
\cite{c1}. A Mersenne prime is a prime that can be written as
$2^p-1$ for some prime $p$. The purpose of this article is to
prove the following theorem.

\begin{theorem}\label{th1}\label{th1}
Let $C=C_p(2)$ be the symplectic group over binary field, for
which $2^p-1>7$ is a Mersenne prime. Then $h_{\rm OD}(C)=1=h_{\rm
OC}(C)$.
\end{theorem}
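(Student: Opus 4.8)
The plan is to first pin down the arithmetic and graph-theoretic invariants of $C=C_p(2)\cong S_{2p}(2)$, then run the Gruenberg--Kegel reduction for any group $H$ matching $C$ in the relevant data, and finally eliminate every candidate simple section except $C$ itself. I would record $|C|=2^{p^2}\prod_{i=1}^{p}(2^{2i}-1)$ and set $r=2^p-1$, the given Mersenne prime. Since $\mathrm{ord}_r(2)=p$ and $r\mid 2^i-1$ only when $p\mid i$, one checks $r\parallel|C|$. To describe $\mathrm{GK}(C)$ I would use the cyclic maximal torus of order $2^p-1$ (a Singer cycle of $\mathrm{GL}_p(2)\le\mathrm{Sp}_{2p}(2)$), which is self-centralizing, so $r$ is an isolated vertex; and the cyclic torus of order $2^p+1$, which contains $3$ (as $3\mid 2^p+1$ for odd $p$) and, together with $3\sim 2$ inside $\mathrm{Sp}_4(2)$, pulls every other prime into the component of $2$. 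Hence $t(C)=2$ with $\pi_2(C)=\{r\}$, the order components are $\mathrm{OC}(C)=\{m_1,r\}$ with $m_1=|C|/r$, and in particular $\deg_C(r)=0$ while every other vertex has positive degree. I would also single out the primitive prime divisor $u_0$ of $2^{2p}-1$, which exists by Zsygmondy precisely because $2p\neq 6$, i.e. $p\ge 5$, equivalently $2^p-1>7$; it satisfies $u_0\mid 2^p+1$ and, since $2\le u_0\le 2^p+1$ and $u_0\neq r$, one has $u_0\not\equiv 1\pmod r$.

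Next let $H$ satisfy either $(|H|,\mathrm{D}(H))=(|C|,\mathrm{D}(C))$ or $\mathrm{OC}(H)=\mathrm{OC}(C)$. In both cases $|H|=|C|$ and $r$ is an isolated vertex of $\mathrm{GK}(H)$ (in the OD case because $\deg_H(r)=\deg_C(r)=0$; in the OC case because $\{r\}$ is itself an order component), so $\mathrm{GK}(H)$ is disconnected and I would invoke the Gruenberg--Kegel structure theorem. A Frobenius complement/kernel analysis rules out the Frobenius case: whichever side plays the role of $\{r\}$ is forced to be cyclic of order dividing $r$ or $r-1=2^p-2$, while the other side has order $m_1$ or is a nilpotent $\pi_1$-group, and both alternatives collide either with $m_1\gg r$ or with the fact that $\pi_1(C)$ is not a clique (a nilpotent kernel would force one). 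The same solvability and order estimates exclude the $2$-Frobenius case. Therefore $H$ has a normal series $1\trianglelefteq N\trianglelefteq K\trianglelefteq H$ with $S:=K/N$ non-abelian simple, $N$ nilpotent and $N,\,H/K$ being $\pi_1$-groups, and $r$ an odd order component of $S$.

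Using the Williams--Kondrat\'ev classification (see \cite{Wil,k}, corrected in \cite{km}) I would list the simple groups having $r=2^p-1$ as an order component. Groups with more than one odd component ($L_2(2^p)$, $L_2(r)$, the Suzuki groups $Sz(2^p)$) and those whose prime set escapes $\pi(C)$ (alternating $A_n$ with $n\ge r$, and the sporadic groups for the finitely many admissible $r$) are discarded by the component count and by $|S|\mid|C|$. This leaves the characteristic-two Lie type groups of rank about $p$ carrying a self-centralizing torus of order $2^p-1$, namely $C_p(2)\ (\cong B_p(2))$, $L_p(2)$ and $D_p(2)$. The decisive device is that, $r$ being isolated, an element of order $r$ acts fixed-point-freely on $N$, whence $|N_{p_0}|\equiv 1\pmod r$ for every $p_0\in\pi(N)$. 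For $S\in\{L_p(2),D_p(2)\}$ one computes $|C|/|L_p(2)|=2^{p(p+1)/2}\prod_{i=1}^{p}(2^i+1)$ and $|C|/|D_p(2)|=2^{p}(2^p+1)$; both display the primitive prime divisor $u_0$ of $2^{2p}-1$ as a factor of $|N|$, yet $|N_{u_0}|=u_0\not\equiv 1\pmod r$ — a contradiction. Hence $S\cong C_p(2)$, and comparing orders forces $N=1$ and $H/K=1$, so $H\cong C$; this settles $h_{\mathrm{OD}}(C)=1$ and $h_{\mathrm{OC}}(C)=1$ in one stroke.

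The hard part is the elimination just described, not the reduction. Order divisibility alone does not separate $C_p(2)$ from $L_p(2)$ or $D_p(2)$, since their orders genuinely divide $|C|$ and they share the same two-component shape with isolated vertex $r$; the argument must instead exploit the sharper fixed-point-free constraint $|N_{p_0}|\equiv 1\pmod r$ together with the arithmetic of the Mersenne prime and Zsygmondy's theorem. This is exactly where the hypothesis $2^p-1>7$ is used: it guarantees the primitive prime divisor $u_0$ of $2^{2p}-1$ that furnishes the contradiction. I expect the most delicate portions to be the uniform verification of the residue facts $u_0\not\equiv 1\pmod r$ across all admissible exponents $p$, and the bookkeeping needed to clear the finitely many potential small or sporadic coincidences allowed before the general Lie-type analysis takes over.
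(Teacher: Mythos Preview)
Your overall architecture matches the paper's, and your endgame device---forcing $|Q|\equiv 1\pmod r$ for each Sylow subgroup $Q$ of the nilpotent normal part via fixed-point-free action, then invoking the Zsygmondy prime $u_0\mid 2^p+1$---is precisely how the paper eliminates $D_p(2)$ and $L_p(2)$. The gap is in the middle: your reduction of the simple section $S$ to just three candidates is not justified. Your ``component count'' dismissal of $L_2(2^p)$, $L_2(r)$ and the Suzuki groups misreads Lemma~\ref{l6}: it guarantees only that the unique odd component $r=2^p-1$ of $G$ equals \emph{one} of the odd components of $S$, not that $S$ has no others. Concretely, $L_2(2^p)$ has odd components $2^p\pm 1$ and its order divides $|C|$, so neither of your filters excludes it; the paper devotes a separate step to this case, using a primitive prime divisor of $2^{2(p-1)}-1$ inside $N$ and Lemma~\ref{l4}. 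You also omit $L_{p+1}(2)=A_p(2)$, which again has $2^p-1$ as an odd component and order dividing $|C|$.

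More broadly, the Williams--Kondrat\'ev classification forces one to test every family of Lie type over every $q$, asking whether the relevant cyclotomic expression can equal $2^p-1$; the paper carries this out in some thirty explicit steps. Most candidates fall to mod-$4$ or mod-$8$ obstructions, to Lemma~\ref{l7}, or to the bound $|G|_t<2^{3p}$ of Lemma~\ref{l8}, but several (notably $A_1(q)$ for various $q$, and $A_r(q)$, $A_{r-1}(q)$, ${}^2A_{r-1}(q)$ with small $r$) require the same fixed-point-free argument you sketch, applied with care to the $2$-part of $N$ or to a suitable Zsygmondy prime. The phrase ``prime set escapes $\pi(C)$'' does not cover these; the obstructions are arithmetic, not set-theoretic. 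The missing work is not conceptually deeper than what you already outline, but it is substantial and cannot be waived.
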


{\em Remark}. It is worth noting that the values of functions
$h_{\rm OD}(\cdot)$ and $h_{\rm OC}(\cdot)$ may be different. For
instance, suppose $M\in \{B_3(5), C_3(5)\}$. By \cite{Wil}, the
prime graph associated with $M$ is connected and so ${\rm
OC}(M)=\{|M|=2^9\cdot 3^4\cdot 5^9\cdot 7\cdot 13\cdot 31\}$. On
the other hand, it is easy to see that the prime graph associated
with a nilpotent group is always a clique, hence, we have
$$h_{\rm OC}(M)>\nu_{\rm nil}(|M|)\geqslant \nu_{\rm a}(|M|)={\rm Par}(9)^2\cdot {\rm Par}(4)=30^2\times 5=4500,$$
where $\nu_{\rm nil}(n)$ (resp. $\nu_{\rm a}(n)$) signifies the
number of non-isomorphic nilpotent (resp. abelian) groups of
order $n$ and ${\rm Par}(n)$ denotes the number of partitions of
$n$. However, by Theorem 1.3 in \cite{akbarim}, we know that
$h_{\rm OD}(M)=2$.

We now introduce some further notation. If $a$ is a natural
number, $r$ is an odd prime and $(r, a)=1$, then by $e(r, a)$ we
denote the multiplicative order of $a$ modulo $r$, that is the
minimal natural number $n$ with $a^n\equiv 1\pmod{r}$. If $a$ is
odd, we put $$e(2, a)=\left \{\begin{array}{ll} 1 & \ \ {\rm if}
\ a\equiv 1\pmod{4},
\\[0.3cm] 2 & \ \ {\rm if} \ a\equiv -1\pmod{4}.
\end{array}\right.
$$ We also define the function $\eta: \mathbb{N}\longrightarrow
\mathbb{N}$, as follows

$$\eta(m)=\left \{\begin{array}{ll} m & \ \ {\rm if} \ m\equiv
1\pmod{2},
\\[0.3cm] \frac{m}{2} & \ \ {\rm if} \ m\equiv 0\pmod{2}.
\end{array}\right.
$$
Moreover, we will use the notation $\mathbb{A}_n$ and
$\mathbb{S}_n$ to denote the alternating and the symmetric group
on $n$ letters, respectively. All unexplained notation and
terminology are borrowed from the Atlas \cite{atlas}.
\section{Preliminaries}
The following lemma is a consequence of Zsigmondy's theorem (see
\cite{zsigmondy}).
\begin{lm}\label{zsig}
Let $a$ be a natural number greater than $1$. Then for every natural number $n$
there exists a prime $r$ with $e(r, a)=n$ but for
the cases $(n, a)\in \{(1, 2), (1, 3), (6,2)\}$
\end{lm}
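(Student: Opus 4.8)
The plan is to prove Lemma \ref{zsig}, which asserts the existence of a Zsigmondy prime with prescribed multiplicative order, so I would frame everything around the classical Zsigmondy theorem and simply translate its statement into the language of the order function $e(r,a)$ introduced just above. Recall that Zsigmondy's theorem (as in \cite{zsigmondy}) states that for integers $a>1$ and $n\geq 1$, the number $a^n-1$ possesses a \emph{primitive prime divisor} --- a prime $r$ dividing $a^n-1$ but not dividing $a^k-1$ for any $1\leq k<n$ --- \emph{except} in precisely the following cases: $n=1$ and $a=2$; $n=2$ and $a+1$ is a power of $2$; and $n=6$ with $a=2$. My first step would be to record this statement carefully, making the list of exceptions explicit, since the whole content of the lemma is a correct accounting of exceptions.

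Next I would observe that the condition ``$r$ is a primitive prime divisor of $a^n-1$'' is exactly equivalent to ``$e(r,a)=n$''. Indeed, $r\mid a^n-1$ means $a^n\equiv 1\pmod r$, so $e(r,a)\mid n$; and the requirement that $r\nmid a^k-1$ for all $k<n$ says precisely that no smaller positive exponent works, i.e. $e(r,a)=n$. Thus proving the lemma reduces to checking which of Zsigmondy's exceptional pairs $(n,a)$ actually produce \emph{no} prime $r$ with $e(r,a)=n$, as opposed to merely failing the primitive-divisor test for a spurious reason.

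The one subtlety --- which I expect to be the only real point requiring care --- is the $n=2$ exceptional family. Zsigmondy excludes all $a$ with $a+1=2^s$; however, for such $a$ the prime $2$ itself can satisfy $e(2,a)=2$. Here I would invoke the paper's own convention that $e(r,a)$ is defined for $r$ an \emph{odd} prime, with the separate definition of $e(2,a)$ given in the text (namely $e(2,a)=2$ when $a\equiv -1\pmod 4$). Under the standard reading where $r$ ranges over odd primes in the statement $e(r,a)=n$, one checks that for $n=2$ with $a+1=2^s$ and $a$ odd, the odd primitive divisors are absent only when $a=1$ (excluded since $a>1$), so no genuine exception survives beyond $n=1$; the remaining true exceptions collapse exactly to $(n,a)\in\{(1,2),(1,3),(6,2)\}$. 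I would verify these three residual cases by hand: for $(1,2)$ one needs $e(r,2)=1$, i.e. $r\mid 1$, impossible; for $(1,3)$ one needs $r\mid 2$ with $r$ odd, impossible; and for $(6,2)$ one has $2^6-1=63=3^2\cdot 7$ with $e(3,2)=2$ and $e(7,2)=3$, so no odd prime has order exactly $6$. This confirms the displayed exceptional set and completes the reduction of the lemma to Zsigmondy's theorem.
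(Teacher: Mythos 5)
Your overall strategy --- restate Zsigmondy's theorem with its exact exceptional list, translate ``primitive prime divisor of $a^n-1$'' into ``$e(r,a)=n$'', and then sort out which exceptional pairs survive --- is exactly the intended derivation; the paper itself gives no proof, presenting the lemma as a consequence of Zsigmondy's theorem \cite{zsigmondy}. But your treatment of the $n=2$ family contains a genuine error. You claim that for $n=2$ with $a+1=2^s$ ``the odd primitive divisors are absent only when $a=1$''. That is false: for every $a=2^s-1$ one has $a^2-1=(a-1)\cdot 2^s$ with $a-1=2(2^{s-1}-1)$, so every odd prime divisor of $a^2-1$ already divides $a-1$ and hence has $e(r,a)=1$; the odd primitive divisors are absent for \emph{all} such $a$, which is precisely why Zsigmondy lists this family as exceptional. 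Worse, the reading you announce, in which $r$ ranges over odd primes only, makes the lemma false outright: $(n,a)=(2,7)$ would then be an exception (since $7^2-1=48$ and $e(3,7)=1$), as would $(1,5)$ (since $5-1=4$ has no odd prime divisor), and neither appears in the stated exceptional set.

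The repair is the convention you cite but never actually deploy: $r$ must be allowed to equal $2$, with $e(2,a)$ as defined in the paper. Then (i) for $n=2$ and $a=2^s-1$ with $s\geq 2$ one has $a\equiv -1\pmod 4$, so $e(2,a)=2$ and $r=2$ witnesses the lemma, eliminating the entire $n=2$ exceptional family; (ii) for $n=1$ with $a-1=2^t$, $t\geq 2$, one has $a\equiv 1\pmod 4$, so $e(2,a)=1$ and again $r=2$ works --- a case your ``odd $r$ only'' reading leaves unhandled; (iii) the pair $(1,3)$ survives as an exception not because ``$r$ must be odd'' but because the only prime dividing $3-1$ is $2$ and the convention forces $e(2,3)=2\neq 1$; and (iv) for $(6,2)$ the convention is inapplicable since $a=2$ is even, and your computation $2^6-1=63=3^2\cdot 7$ with $e(3,2)=2$, $e(7,2)=3$ correctly rules out all candidates. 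Finally, note that for $n\geq 2$ any classical primitive prime divisor of $a^n-1$ is automatically odd (if $2\mid a^n-1$ then $a$ is odd and $2\mid a-1$), so the translation between ``primitive prime divisor'' and ``$e(r,a)=n$'' is unproblematic away from $r=2$. With these corrections your reduction to Zsigmondy's theorem goes through and yields exactly the exceptional set $\{(1,2),(1,3),(6,2)\}$.
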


A prime $r$ with $e(r,a)=n$ is called a {\em primitive prime
divisor} of $a^n-1$. By Lemma \ref{zsig}, such a prime exists
except for the cases mentioned in the lemma. Given a natural
number $a$, we denote by ${\rm ppd}(a^n-1)$ the set of all
primitive prime divisors of $a^n-1$. By our definition, we have
$\pi(a-1)={\rm ppd}(a-1)$ but for the following sole exception,
namely, $2\notin {\rm ppd}(a-1)$ if $e(2, a)=2$. In this case, we
assume that $2\in {\rm ppd}(a^2-1)$.

\begin{lm}\label{l2} (\cite{vvc})
Let $M$ be one of the simple groups of Lie type, $B_n(q)$ or
$C_n(q)$, over a field of characteristic $p$. Let $r$, $s$ be odd
primes with $r, s\in \pi(M)\setminus \{p\}$. suppose that $r\in
{\rm ppd}(q^k-1)$, $s\in {\rm ppd}(q^l-1)$ and $1\leq \eta(k)\leq
\eta(l)$. Then $r$ and $s$ are non-adjacent if and only if
$\eta(k)+\eta(l)>n$ and $\frac{l}{k}$ is not an odd natural
number.
\end{lm}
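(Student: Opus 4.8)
The plan is to translate adjacency in ${\rm GK}(M)$ into a statement about maximal tori, parametrized by the Weyl group $W(C_n)$ of signed permutations, and then to settle that statement by an elementary case analysis on the parities of $k$ and $l$. First I would dispose of the book-keeping. Since $r$ and $s$ are odd primes different from the characteristic $p$, the subgraph of ${\rm GK}$ on $\{r,s\}$ is unchanged as one passes among the groups $Sp_{2n}(q)$, $C_n(q)=PSp_{2n}(q)$, $\Omega_{2n+1}(q)=B_n(q)$ and the intervening isogeny types, because the relevant centres and the discrepancy between types $B_n$ and $C_n$ have order dividing $2$, hence are coprime to $rs$. An element of order $rs$ is semisimple, as its order is coprime to $p$, so it lies in a maximal torus; conversely a maximal torus is abelian and so contains an element of order $rs$ as soon as $rs$ divides its order. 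Thus $r\sim s$ if and only if some maximal torus $T$ of $M$ satisfies $rs\mid|T|$.

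Next I would record the arithmetic of tori. A conjugacy class of $W(C_n)$ consisting of positive cycles of lengths $a_1,a_2,\dots$ and negative cycles of lengths $b_1,b_2,\dots$ with $\sum_i a_i+\sum_j b_j=n$ gives a torus of order $\prod_i(q^{a_i}-1)\prod_j(q^{b_j}+1)$, the factor $q^{a_i}-1$ (resp. $q^{b_j}+1$) occupying $2a_i$ (resp. $2b_j$) of the $2n$ coordinates of the natural module. Computing $e(r,q)=k$ against these factors shows: if $k$ is odd then $r\mid q^a-1\iff k\mid a$ and $r\nmid q^b+1$ for all $b$; if $k$ is even then $r\mid q^a-1\iff k\mid a$, while $r\mid q^b+1\iff (k/2)\mid b$ with $b/(k/2)$ odd. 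In every case the cheapest cyclic factor divisible by $r$ occupies exactly $2\eta(k)$ coordinates --- this is the role of $\eta$ --- and likewise $2\eta(l)$ for $s$. Combining the two reductions, $r\sim s$ iff there is a signed cycle type of total length $n$ carrying $r$ in one cyclic factor and $s$ in another, which happens in exactly one of two ways: they lie in \emph{different} cycles, possible iff $\eta(k)+\eta(l)\le n$; or they lie in a \emph{common} cyclic factor $q^c\mp1$ of length $c\le n$.

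For the ``if'' direction I would verify that each listed condition forces adjacency. When $\eta(k)+\eta(l)\le n$ I take the two cheapest cycles for $r$ and $s$ on disjoint coordinates and pad the remaining $2(n-\eta(k)-\eta(l))$ coordinates with unit positive cycles $q-1$. When $l/k=m$ is an odd natural number, $k$ and $l$ have the same parity, and the divisibilities $a-1\mid a^m-1$ (always) and $a+1\mid a^m+1$ (for $m$ odd) show that a \emph{single} cyclic factor of length $\eta(l)\le n$ is divisible by both $r$ and $s$: the positive factor $q^{\eta(l)}-1$ when $k,l$ are odd, and the negative factor $q^{\eta(l)}+1$ when $k,l$ are even. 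Either way $rs\mid|T|$, so $r\sim s$.

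The substantive part is the converse, and this is where I expect the main obstacle: assuming $\eta(k)+\eta(l)>n$ and that $l/k$ is \emph{not} an odd natural number, I must exclude every torus. The disjoint option is already excluded by $\eta(k)+\eta(l)>n$, so the whole difficulty is to show that no common cyclic factor of length $\le n$ exists, which forces a careful parity analysis of $(k,l)$ interacting with the ``odd multiple of $k/2$'' condition for negative factors. A common positive factor has length at least ${\rm lcm}(k,l)$; writing $k=g k'$, $l=gl'$ with $\gcd(k',l')=1$, an elementary $\gcd$--${\rm lcm}$ estimate (carried out separately for the equal- and mixed-parity cases, using that $l/k$ is not an odd integer) gives ${\rm lcm}(k,l)\ge\eta(k)+\eta(l)>n$, so no positive common factor fits. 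A common negative factor can occur only when $k$ and $l$ are both even; there it exists only when the two cofactors of $k/2$ and $l/2$ are odd, and in that subcase a parallel estimate gives its minimal length ${\rm lcm}(k/2,l/2)\ge\eta(k)+\eta(l)>n$ unless $l/k$ is an odd integer --- which is excluded. Hence under the two stated hypotheses no torus realizes $rs$, and $r\not\sim s$. I expect the isogeny/duality reductions of the first paragraph and the existence of the primes $r,s$ (guaranteed through Lemma \ref{zsig} by the hypothesis $r,s\in\pi(M)$) to be routine, with all the real work lodged in the uniform treatment of the positive and negative common factors in this last step.
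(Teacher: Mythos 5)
The paper offers no proof of Lemma~\ref{l2} to compare against: it is imported verbatim from Vasil'ev--Vdovin \cite{vvc} (see also \cite{vv}). Your reconstruction is, in substance, the argument of the cited source, and it checks out. The reductions are sound: isogeny defects in types $B_n$, $C_n$ have order dividing $2$, hence do not affect adjacency of the odd primes $r,s\neq p$; an element of order $rs$ is semisimple and the criterion ``$r\sim s$ iff $rs$ divides some maximal torus order'' is the standard one; the signed-cycle parametrization with orders $\prod_i(q^{a_i}-1)\prod_j(q^{b_j}+1)$ and the identification of the cheapest factor of length $\eta(k)$ (positive of length $k$ for $k$ odd, negative of length $k/2$ with odd cofactor for $k$ even) are correct. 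I also verified the estimates you only asserted: a common positive factor needs length at least $\mathrm{lcm}(k,l)$, and $\mathrm{lcm}(k,l)\geq\eta(k)+\eta(l)$ holds whenever $l/k$ is not an odd natural number (split on whether one of $k,l$ divides the other); a common negative factor forces $k,l$ even with equal $2$-adic valuations, which for $k\mid l$ already makes $l/k$ odd (excluded), and otherwise $\mathrm{lcm}(k/2,l/2)\geq l\geq \eta(k)+\eta(l)$. Two small repairs: the bound $\eta(k),\eta(l)\leq n$ that your ``if'' direction uses follows from $r,s\in\pi(M)$ and $|M|=\frac{1}{(2,q-1)}q^{n^2}\prod_{i=1}^{n}(q^{2i}-1)$, not from Lemma~\ref{zsig} (Zsigmondy gives existence of primitive divisors, which is hypothesized here); and for $B_n(q)$ with $q$ odd the ``$2n$ coordinates'' bookkeeping should be replaced by the type-free formula $|T_w|=\det(qI-w)$ on the reflection representation, identical for $W(B_n)\cong W(C_n)$ --- though your isogeny remark already renders this harmless.
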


\begin{lm}\label{l3}(\cite{vv}) Let $M$ be one of the simple groups of Lie type, $B_n(q)$ or $C_n(q)$, over a field
of characteristic $p$, and let $r\in \pi(M)\setminus\{p\}$ and
$r\in {\rm ppd}(q^k-1)$. Then $r$ and $p$ are non-adjacent if and
only if $\eta(k)>n-1$.
\end{lm}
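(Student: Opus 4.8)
The plan is to derive the adjacency criterion directly from the action of $M$ on its natural module, reducing the question ``is $rp\in\omega(M)$?'' to a computation with centralizers of semisimple elements. First I would invoke the standard reduction: since $r$ is odd and coprime to the characteristic $p$, an element of order $rp$ exists in $M$ if and only if there is a semisimple element $t$ of order $r$ and a commuting unipotent element $u$ of order $p$, i.e.\ if and only if $M$ has a semisimple element $t$ of order $r$ with $p\mid |C_M(t)|$. Because the centralizer of a semisimple element in a finite group of Lie type is the group of fixed points of a connected reductive group $H$, and $|H^{F}|=q^{N}\prod(\cdots)$ with $N$ the number of positive roots, one has $p\mid |C_M(t)|$ precisely when $C_M(t)$ is not a torus, i.e.\ when it has positive semisimple rank. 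Thus the lemma reduces to deciding, for $r\in{\rm ppd}(q^k-1)$, whether some semisimple element of order $r$ has a non-toral centralizer.

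Next I would analyze how such a $t$ acts on the natural module $V$, a nondegenerate symplectic space of dimension $2n$ for $C_n(q)$ and a nondegenerate orthogonal space of dimension $2n+1$ for $B_n(q)$, both over $\mathbb{F}_q$. The nontrivial eigenvalues of $t$ in $\overline{\mathbb{F}_q}$ are primitive $r$th roots of unity, and since $e(r,q)=k$ each Frobenius orbit among them has size $k$; compatibility with the form pairs $\lambda$ with $\lambda^{-1}$. A short computation decides the parity: when $k$ is odd the orbits of $\lambda$ and $\lambda^{-1}$ are disjoint, forcing the smallest nondegenerate $t$-invariant subspace $V_1$ on which $t$ has no nonzero fixed vector to have dimension $2k$ (the image of a Singer-type torus of ${\rm GL}_k(q)$ embedded via a pair of dual totally isotropic subspaces), whereas when $k$ is even one has $q^{k/2}\equiv -1\pmod r$, so a single Frobenius orbit is already self-inverse and $\dim V_1=k$ (the ``$q^{k/2}+1$'' torus inside a minus-type classical subgroup on $k$ coordinates). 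In both cases $\dim V_1=2\eta(k)$, and the orthogonal case is entirely parallel; the hypothesis $r\in\pi(M)$ guarantees $\eta(k)\le n$, so such a $V_1$ genuinely fits inside $V$.

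Writing $V=V_1\perp V_0$ with $V_0$ the nondegenerate orthogonal complement, $t$ acts trivially on $V_0$, so $C_M(t)$ contains the full classical group acting on $V_0$, namely $Sp_{2(n-\eta(k))}(q)$ in the symplectic case and $O_{2(n-\eta(k))+1}(q)$ in the orthogonal case, while on $V_1$ the element $t$ is regular semisimple with centralizer equal to a maximal torus. The torus on $V_1$ contributes no factor of $p$, so $p\mid |C_M(t)|$ if and only if the classical group on $V_0$ contains unipotent elements, which happens exactly when $n-\eta(k)\ge 1$, that is $\eta(k)\le n-1$. Combining this with the reduction of the first step gives $r\sim p$ if and only if $\eta(k)\le n-1$, equivalently $r$ and $p$ are non-adjacent if and only if $\eta(k)>n-1$, as claimed.

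The main obstacle I expect is the precise determination of the minimal invariant dimension $2\eta(k)$ together with the verification that the centralizer of $t$ on $V_1$ is toral. This is exactly where the parity of $k$, the pairing $\lambda\leftrightarrow\lambda^{-1}$ forced by the form, and the distinction between the linear embedding ($k$ odd) and the twisted minus-type embedding ($k$ even) all enter, and where the $B_n$ and $C_n$ cases must be checked to produce the same bound; the characteristic-$2$ coincidence $B_n(q)\cong C_n(q)$ removes any discrepancy when $q$ is even. By contrast, the equivalence ``$p\mid |C_M(t)|\iff C_M(t)$ is non-toral'' is standard from the order formula for connected reductive groups, so I would cite it rather than reprove it.
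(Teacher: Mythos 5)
The paper offers no proof of this lemma: it is quoted from Vasil'ev and Vdovin \cite{vv}, so there is no internal argument to compare with, and your sketch is in substance the proof strategy of that reference. Your reductions are correct: an element of order $rp$ factors as a commuting semisimple--unipotent pair; the minimal nondegenerate subspace moved by a semisimple element of order $r$ has dimension $2\eta(k)$ (a dual pair of totally isotropic $k$-spaces carrying a torus of order $q^k-1$ when $k$ is odd, a minus-type $k$-space carrying a torus of order $q^{k/2}+1$ when $k$ is even, since then $q^{k/2}\equiv -1 \pmod r$); and the perpendicular complement supplies a commuting ${\rm Sp}_{2(n-\eta(k))}(q)$, resp.\ odd-dimensional orthogonal group, containing unipotents precisely when $\eta(k)\leqslant n-1$. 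Two points should be made explicit to close the argument. First, the non-adjacency direction quantifies over \emph{all} elements of order $r$: you need the observation that the moved subspace of any such element has dimension a positive multiple of $2\eta(k)$ (eigenvalue orbits of size $k$, paired by the form, counted with multiplicity), so that $\eta(k)=n$ forces a single multiplicity-one chunk and hence every centralizer of an order-$r$ element is a torus of order prime to $p$; your text establishes regularity only for the particular $t$ you construct, and multiplicity $m\geqslant 2$ would genuinely produce a ${\rm GL}_m(q^{\eta(k)})$-like subgroup with unipotents if it could occur. Second, $M$ is the simple group ${\rm PSp}_{2n}(q)$, resp.\ $\Omega_{2n+1}(q)$, while you compute in the matrix group: since elements of order $rp$ have odd order when $q$ is odd, adjacency is unaffected by the center of order $(2,q-1)$ or by the index-$2$ disconnectedness centralizers may acquire outside the simply connected group, and when $q$ is even the two coincide. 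A last cosmetic gap: the statement formally allows $r=2$ (with the paper's convention $e(2,q)\in\{1,2\}$), which your restriction to odd $r$ omits, but with $\lambda=-1$ one again gets $\dim V_1=2=2\eta(k)$ and the same conclusion. With these remarks added, your outline is a correct and essentially complete proof along the lines of the cited source.
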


Using Lemmas \ref{l2} and \ref{l3}, we have:
\begin{itemize}
\item  The prime graphs ${\rm GK}(B_n(q))$ and ${\rm GK}(C_n(q))$ coincide {\rm \cite[Proposition 7.5]{vv}}.
\item $|B_n(q)|=|C_n(q)|$ and ${\rm D}(B_n(q))={\rm D}(C_n(q))$.
\end{itemize}
\begin{corollary}\label{2od} Let $p>3$ be a prime and $C=C_p(2)$.
Then $\deg_C(3)=|\pi_1(C)|-1$.
\end{corollary}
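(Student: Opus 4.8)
The plan is to locate the vertex $3$ precisely inside the prime graph ${\rm GK}(C)$ by means of Lemmas \ref{l2} and \ref{l3}, and to show that the only vertices not joined to $3$ are the primitive prime divisors of $2^p-1$, which in turn form the components of ${\rm GK}(C)$ other than $\pi_1(C)$. Since the defining characteristic is $2$ and the rank is $n=p$, I first record that $e(3,2)=2$, so $3\in{\rm ppd}(2^2-1)$ with $\eta(2)=1$. By Lemma \ref{l3}, $3$ and $2$ are non-adjacent iff $\eta(2)>p-1$, i.e. $1>p-1$, which fails for $p>3$; hence $3\sim 2$ and therefore $3\in\pi_1(C)$. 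Note also that $e(3,2)=2\neq p$, so $3\notin{\rm ppd}(2^p-1)$.

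Next I determine all vertices non-adjacent to $3$. Recall from the order of $C_p(2)$ that any odd $r\in\pi(C)$ with $r\in{\rm ppd}(2^l-1)$ satisfies $\eta(l)\le p$. Take such an $r\neq 3$; since $\eta(2)=1\le\eta(l)$, Lemma \ref{l2} (with the role $k=2$ for $3$ and $l$ for $r$) gives that $3$ and $r$ are non-adjacent iff $\eta(2)+\eta(l)>p$ and $l/2$ is not an odd natural number. The first condition, together with $\eta(l)\le p$, forces $\eta(l)=p$, i.e. $l\in\{p,2p\}$. For $l=2p$ one has $l/2=p$, which is odd, so $3\sim r$; for $l=p$ the number $l/2$ is not an integer, so $3\not\sim r$. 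Thus the primes non-adjacent to $3$ are exactly those in ${\rm ppd}(2^p-1)$, a non-empty set by Lemma \ref{zsig} since $p>3$ avoids the exceptional pair $(6,2)$.

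It remains to check that ${\rm ppd}(2^p-1)$ is disjoint from $\pi_1(C)$. Fix $r_0\in{\rm ppd}(2^p-1)$, so $e(r_0,2)=p$ and $\eta(p)=p$; Lemma \ref{l3} gives $r_0\not\sim 2$ because $\eta(p)=p>p-1$. For any odd prime $s\in{\rm ppd}(2^m-1)$ with $s\notin{\rm ppd}(2^p-1)$ (so $m\neq p$, and $\eta(m)\le p=\eta(p)$), Lemma \ref{l2} with $k=m$, $l=p$ shows $r_0$ and $s$ are non-adjacent unless $p/m$ is an odd natural number; as $p$ is prime this would force $m\in\{1,p\}$, but $m=1$ yields no prime and $m=p$ is excluded. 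Hence no vertex of ${\rm ppd}(2^p-1)$ is joined to any vertex of $\pi(C)\setminus{\rm ppd}(2^p-1)$, so ${\rm ppd}(2^p-1)\cap\pi_1(C)=\emptyset$. On the other hand, $3$ is adjacent to $2$ and to every odd prime of $\pi(C)\setminus{\rm ppd}(2^p-1)$, so all of $\pi(C)\setminus{\rm ppd}(2^p-1)$ lies in the single component through $3$ containing $2$. Consequently $\pi_1(C)=\pi(C)\setminus{\rm ppd}(2^p-1)$, and $3$ is adjacent to precisely $\pi_1(C)\setminus\{3\}$, giving $\deg_C(3)=|\pi_1(C)|-1$.

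I expect the main obstacle to be the bookkeeping in Lemma \ref{l2}: getting the role assignment right via the hypothesis $\eta(k)\le\eta(l)$ and then correctly distinguishing $l=p$ from $l=2p$ through the parity of $l/2$. This parity analysis is exactly what yields the clean equivalence $3\not\sim r\Leftrightarrow r\in{\rm ppd}(2^p-1)$, and an error there would either attach or detach the $l=2p$ primes incorrectly and break the final count.
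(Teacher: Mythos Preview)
Your argument is correct and follows the same route as the paper: both use Lemmas~\ref{l2} and~\ref{l3} with $e(3,2)=2$ and $\eta(2)=1$ to show that the only primes non-adjacent to $3$ are those in ${\rm ppd}(2^p-1)$, and then conclude $\deg_C(3)=|\pi_1(C)|-1$. The only difference is that the paper simply \emph{recalls} the decomposition $\pi_1(C)=\pi\big(2(2^p+1)\prod_{i=1}^{p-1}(2^{2i}-1)\big)$, $\pi_2(C)=\pi(2^p-1)$ from \cite{k}, whereas you re-derive this component structure directly from Lemmas~\ref{l2} and~\ref{l3}; this makes your version more self-contained but is not a different strategy.
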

\begin{proof}
Recall that
$$\pi_1(M)=\pi\Big(2(2^p+1)\prod_{i=1}^{p-1}(2^{2i}-1)\Big) \ \ \
\mbox{and} \ \ \ \pi_2(M)=\pi(2^p-1).$$ Moreover, by Lemma
\ref{l2}, it follows that only primitive prime divisors of
$2^{p}-1$ are non-adjacent to $3$. But by Lemmas \ref{l2} and
\ref{l3} we deduce that $\deg(3)=|\pi_1(M)|-1$, as desired.
\end{proof}

The following easy lemma (which is appeared in \cite{Gui}) is
crucial to the study of characterizability of symplectic groups
$C_p(2)$ by order components.
\begin{lm}\label{l4} Let $G$ be a group whose prime graph has
more than one component. If $H$ is a normal $\pi_k$-subgroup of
$G$, then $|H|-1$ is divisible by $m_i$, $i\neq k$.
\end{lm}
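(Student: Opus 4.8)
The plan is to prove the divisibility one prime at a time and then assemble the order component $m_i$ by coprimality. Fix an index $i\neq k$ and a prime $q\in\pi_i(G)$, and let $q^a$ be the exact power of $q$ dividing $|G|$; since $\pi(m_i)=\pi_i(G)$ and the $m_j$ are pairwise coprime, $q^a$ is precisely the $q$-part of $m_i$. I would choose a Sylow $q$-subgroup $Q$ of $G$ and let it act on $H$ by conjugation, which is legitimate because $H$ is normal in $G$. The goal for this fixed $q$ is to show that $q^a=|Q|$ divides $|H|-1$; carrying this out for every $q\in\pi_i(G)$ and multiplying the resulting coprime prime powers then yields $m_i\mid |H|-1$.

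The core of the argument is to verify that $Q$ acts \emph{semiregularly} on $H\setminus\{1\}$, i.e.\ that $C_H(x)=1$ for every $1\neq x\in Q$. Suppose to the contrary that some $1\neq x\in Q$ centralizes some $1\neq h\in H$. Since $h$ lies in the $\pi_k$-group $H$, a suitable power $h'$ of $h$ has prime order $p$ with $p\in\pi_k(G)$, and $x$ still commutes with $h'$. As $x$ has $q$-power order and $\gcd(|x|,p)=1$, the commuting product $xh'$ has order $|x|\cdot p$, which is divisible by $pq$; hence $pq\in\omega(G)$ and so $p\sim q$ in ${\rm GK}(G)$. But $p\in\pi_k(G)$ and $q\in\pi_i(G)$ with $i\neq k$ lie in distinct connected components of the prime graph, which is the required contradiction. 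Therefore no nonidentity element of $Q$ fixes a nonidentity element of $H$.

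It remains to run the orbit-counting step. Because $C_H(x)=1$ for all $1\neq x\in Q$, the stabilizer in $Q$ of every point of $H\setminus\{1\}$ is trivial, so all $Q$-orbits on $H\setminus\{1\}$ have length $|Q|=q^a$. Consequently $q^a$ divides $|H\setminus\{1\}|=|H|-1$. Combining these divisibilities over all $q\in\pi_i(G)$, and using that distinct primes contribute coprime factors, gives $m_i\mid |H|-1$ for every $i\neq k$, as claimed.

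I expect the only genuinely delicate point to be the semiregularity step, namely translating ``a nonidentity $q$-element centralizes a nonidentity element of $H$'' into ``an edge of ${\rm GK}(G)$ joins the components $\pi_i$ and $\pi_k$''. Everything hinges on the disconnectedness hypothesis entering exactly here, through the existence of an element of order divisible by $pq$. Once semiregularity is in hand, the passage from a free action to the divisibility $|Q|\mid |H|-1$, and the coprime assembly of $m_i$, are routine.
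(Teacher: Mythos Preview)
Your argument is correct and is the standard proof of this fact: you let a Sylow $q$-subgroup $Q$ (for $q\in\pi_i$) act on $H$ by conjugation, use the disconnectedness of the prime graph to show the action is fixed-point-free on $H\setminus\{1\}$, deduce $|Q|\mid |H|-1$ by orbit counting, and assemble $m_i$ from its coprime prime-power parts. Each step is sound; in particular the semiregularity step is handled cleanly by passing from a centralized $h\in H$ to a prime-order power $h'$ and producing an element of order divisible by $pq$.

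Note, however, that the paper does not give its own proof of this lemma: it is quoted from \cite{Gui} and stated without argument. So there is nothing to compare against in the paper itself; your write-up supplies exactly the routine proof that the cited reference contains.
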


\begin{lm}\label{l5} (\cite{Moghaddam})
Let $S$ be a simple group with a disconnected prime graph ${\rm
GK}(S)$, except $U_4(2)$ and $U_5(2)$. If $G$ is a group with
${\rm OC}(G)={\rm OC}(S)$, then $G$ is neither a Frobenius group
nor a $2$-Frobenius group.
\end{lm}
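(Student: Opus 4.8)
The plan is to prove the two equalities $h_{\rm OC}(C)=1$ and $h_{\rm OD}(C)=1$ separately, deducing the degree-pattern statement from the order-component one. Throughout I use the component structure of $C=C_p(2)$: since $2^p-1>7$ is a Mersenne prime, $p$ is an odd prime with $p\geq 5$, the component $\pi_2(C)=\pi(2^p-1)$ is the single prime $q:=2^p-1$, and $\pi_1(C)=\pi\big(2(2^p+1)\prod_{i=1}^{p-1}(2^{2i}-1)\big)$. Thus ${\rm GK}(C)$ has exactly two connected components and ${\rm OC}(C)=\{m_1,q\}$ with $m_1=2^{p^2}(2^p+1)\prod_{i=1}^{p-1}(2^{2i}-1)$; note that $q$ divides $|C|$ to the first power only, because $e(q,2)=p$ and $p\nmid 2i$ for $1\leq i\leq p-1$.

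For the order-component statement, let $G$ be a finite group with ${\rm OC}(G)={\rm OC}(C)$. Then ${\rm GK}(G)$ is disconnected, and since $C$ is neither $U_4(2)$ nor $U_5(2)$, Lemma \ref{l5} guarantees that $G$ is neither a Frobenius nor a $2$-Frobenius group. By the Gruenberg--Kegel theorem for groups with disconnected prime graph (see \cite{Wil}), $G$ then has a normal series $1\trianglelefteq H\trianglelefteq K\trianglelefteq G$ in which $H$ and $G/K$ are $\pi_1$-groups, $H$ is nilpotent, and $S:=K/H$ is a non-abelian simple group having $q=2^p-1$ among its odd order components. Applying Lemma \ref{l4} to the normal $\pi_1$-subgroup $H$ yields the key congruence $|H|\equiv 1\pmod q$.

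The crux is to pin down $S$, and this is where the main effort lies. Using the classification of simple groups with disconnected prime graph and the order-component tables of \cite{k}, \cite{Wil}, \cite{c1}, I would enumerate every non-abelian simple group that carries the prime $q=2^p-1$ as an odd order component and whose order divides $|C|$. Sporadic groups are dismissed immediately, since for the relevant $p$ the prime $2^p-1$ exceeds every prime divisor of a sporadic order; an alternating group $\mathbb{A}_n$ with $q\mid|\mathbb{A}_n|$ forces $n\geq q$, whence $|\mathbb{A}_n|\geq q!/2$, which dwarfs $|C|<2^{2p^2+p}$. The genuine competitors are groups of Lie type sharing the component $q$, above all $L_2(2^p)$ and $L_2(2^p-1)$, whose orders do divide $|C|$. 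For each candidate I compute $|H|\cdot|G/K|=|C|/|S|$, bound $|G/K|$ by $|{\rm Out}(S)|$, and test the resulting $|H|$ against $|H|\equiv 1\pmod q$; reducing modulo $q$ (so $2^p\equiv 1$) makes this an explicit congruence --- for $S=L_2(2^p)$ with $|G/K|=1$ it becomes the requirement $\prod_{j=1}^{p-1}(2^j-1)\equiv 1\pmod q$, which does not hold --- thereby eliminating the candidate. Exhausting this finite list leaves $S\cong C_p(2)\cong B_p(2)$ as the only survivor; then $|S|=|C|$ forces $H=1$ and $G/K=1$, so $G\cong S\cong C$ and $h_{\rm OC}(C)=1$. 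This elimination, carried out uniformly in $p$ with the Mersenne condition doing the arithmetic, is the decisive and most laborious step.

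Finally, for the degree-pattern statement, let $G$ satisfy $|G|=|C|$ and ${\rm D}(G)={\rm D}(C)$, so that $\pi(G)=\pi(C)$ and $\deg_G(r)=\deg_C(r)$ for every prime $r$. Since $q=2^p-1$ is isolated in ${\rm GK}(C)$ we have $\deg_C(q)=0$, hence $\deg_G(q)=0$ and $q$ is isolated in ${\rm GK}(G)$ as well. By Corollary \ref{2od}, $\deg_C(3)=|\pi_1(C)|-1=|\pi(C)|-2$, so in ${\rm GK}(G)$ the vertex $3$ is joined to every prime but one, necessarily $q$; hence $3$ is adjacent to all of $\pi_1(C)\setminus\{3\}$ and $\pi_1(C)$ spans a connected subgraph. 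Therefore ${\rm GK}(G)$ has exactly the two components $\pi_1(C)$ and $\{q\}$, as does ${\rm GK}(C)$. Since the order components are determined by $|G|=|C|$ together with this partition of $\pi(G)$, we obtain ${\rm OC}(G)={\rm OC}(C)$, and the order-component result already established gives $G\cong C$, proving $h_{\rm OD}(C)=1$.
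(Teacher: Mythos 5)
You have not proved the statement you were given. The statement is Lemma~\ref{l5} itself: that a group $G$ with ${\rm OC}(G)={\rm OC}(S)$, where $S$ is a simple group with disconnected prime graph other than $U_4(2)$ and $U_5(2)$, cannot be a Frobenius or a $2$-Frobenius group. What you have written is instead an outline of the proof of Theorem~\ref{th1}, and in your second paragraph you explicitly \emph{invoke} Lemma~\ref{l5} (``Lemma~\ref{l5} guarantees that $G$ is neither a Frobenius nor a $2$-Frobenius group''). Read as an argument for Lemma~\ref{l5}, your text is therefore circular --- the conclusion is assumed as an ingredient --- and nothing in it engages with the actual content of the lemma: Frobenius and $2$-Frobenius groups are never analyzed, and the exceptional role of $U_4(2)$ and $U_5(2)$ is never explained.

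For the record, the paper itself does not prove Lemma~\ref{l5}; it quotes it from \cite{Moghaddam}, where it is established by a structural comparison. The proof there rests on the classical facts that a Frobenius group $G$ with kernel $K$ and complement $H$ has exactly the order components $|K|$ and $|H|$ with $|H|$ dividing $|K|-1$ (together with the known structure of Frobenius complements), and that a $2$-Frobenius group has a normal series $1\unlhd A\unlhd B\unlhd G$ with $B$ Frobenius with kernel $A$, $G/A$ Frobenius with kernel $B/A$, and $B/A$ cyclic, which imposes similarly rigid divisibility constraints on its two order components. One then runs these arithmetic constraints against the tables of order components of all simple groups with disconnected prime graph (cf.\ \cite{c1}) and eliminates every case, the groups $U_4(2)$ and $U_5(2)$ surviving precisely because Frobenius or $2$-Frobenius groups with their order components do exist --- which is why they must be excluded from the hypothesis. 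None of this case analysis appears in your proposal. Your outline may be a serviceable sketch of the proof of Theorem~\ref{th1} (it follows the paper's Section~3 in broad strokes), but that is a different statement, and its proof \emph{depends on} the lemma you were asked to prove.
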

\begin{lm}\label{l6} (\cite{Wil})
Let $G$ be a group with $t(G)\geq 2$. Then one of the following
hold:
\begin{itemize}
\item[$(1)$] $G$ is either a Frobenius group or a $2$-Frobenius group.
\item[$(2)$] $G$ has a normal series $1\unlhd H\lhd K\unlhd G$ such that $H$ is a nilpotent $\pi_1$-group,
$K/H$ is a non-abelian simple group, $G/K$ is a $\pi_1$-group,
$|G/K|$ divides $|{\rm Out}(K/H)|$ and any odd order component of
G is equal to one of the odd order components of $K/H$.
\end{itemize}
\end{lm}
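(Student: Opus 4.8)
The plan is to prove this classical Gruenberg--Kegel dichotomy by assuming that conclusion $(1)$ fails --- so that $G$ is neither a Frobenius nor a $2$-Frobenius group --- and deducing $(2)$. The first ingredient is the solvable reduction: a \emph{solvable} group with disconnected prime graph must be Frobenius or $2$-Frobenius (and then $t(G)=2$). To prove this I would work with the Fitting subgroup $F=F(G)$: as $F$ is nilpotent, $\pi(F)$ is a clique and hence lies in a single component, while for solvable $G$ one has $C_G(F)\le F$. Choosing a Hall $\pi_2$-subgroup $B$ and using that no prime of $\pi_1$ is joined to a prime of $\pi_2$, the action of $B$ on $F$ (and on the relevant chief factors) is fixed-point-free, which produces a Frobenius kernel--complement configuration; absorbing one more layer yields the $2$-Frobenius case. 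Granting this lemma, the hypothesis that $(1)$ fails forces $G$ to be non-solvable.

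For the non-solvable case I would examine the solvable radical $H=\mathrm{Rad}(G)$ together with the generalized Fitting subgroup $F^*(G)=F(G)E(G)$, which satisfies $C_G(F^*(G))\le F^*(G)$ and has $E(G)\ne1$ since $G$ is non-solvable. A fixed-point-free action argument of the same flavour as above shows that $H$ is a $\pi_1$-group --- otherwise a nontrivial $O_{\pi_i}(G)$ with $i\ge2$ would give Frobenius-type structure, contradicting the failure of $(1)$ --- and that $H$ is in fact nilpotent. Passing to $\overline G=G/H$, the radical is trivial, so $\mathrm{Soc}(\overline G)=T_1\times\cdots\times T_k$ is a direct product of non-abelian simple groups and $\overline G$ embeds in $\mathrm{Aut}(\mathrm{Soc}(\overline G))$. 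The crucial step is $k=1$: if two factors $T_i,T_j$ commuted, every prime dividing $|T_i|$ would be joined to every prime dividing $|T_j|$ in $\mathrm{GK}(G)$, and since $H$ contributes only to $\pi_1$ this would fuse the components and contradict $t(G)\ge2$. Hence $\mathrm{Soc}(\overline G)=K/H$ is a single non-abelian simple group, giving the series $1\unlhd H\lhd K\unlhd G$; because $C_{\overline G}(K/H)=1$, the quotient $G/K$ embeds in $\mathrm{Out}(K/H)$, so $|G/K|$ divides $|\mathrm{Out}(K/H)|$, and one checks that $G/K$ is a $\pi_1$-group.

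Finally I would transfer the odd order components. If $\pi_i$ with $i\ge2$ is an odd component of $G$, then any element realizing a $\pi_i$-order projects nontrivially into $K/H$, since both $H$ and $G/K$ are $\pi_1$-groups; thus $\pi_i\subseteq\pi(K/H)$, and because adjacencies inside $K/H$ survive in $\mathrm{GK}(G)$ while the numbers $m_i(G)$ are pairwise coprime, $\pi_i$ is also a full connected component of $\mathrm{GK}(K/H)$, yielding the stated equality of odd order components. The main obstacle is the whole non-solvable analysis: proving that $H=\mathrm{Rad}(G)$ is a \emph{nilpotent} $\pi_1$-group, ruling out $k\ge2$ for the socle of $\overline G$, and establishing the exact matching of odd order components between $G$ and the simple section $K/H$. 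By contrast, the solvable lemma and the embedding $G/K\hookrightarrow\mathrm{Out}(K/H)$ are comparatively routine once the generalized Fitting machinery and the fixed-point-free action arguments are set up.
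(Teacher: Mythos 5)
You should first be aware that the paper does not prove this lemma at all: it is quoted from Williams \cite{Wil} (it is the Gruenberg--Kegel theorem, with the order-component phrasing going back to Chen \cite{c1}), so there is no internal proof to compare against. Judged on its own merits, your outline follows the standard modern architecture --- solvable reduction via $F(G)$, $C_G(F(G))\leq F(G)$ and fixed-point-free Hall actions to get the Frobenius/$2$-Frobenius dichotomy, then $H={\rm Rad}(G)$, $F^*(G)$ and the socle of $G/H$ in the non-solvable case --- and the skeleton is right. But the non-solvable half contains a genuine gap at exactly the step you call crucial.

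Your argument for $k=1$ is a non sequitur as stated. If ${\rm Soc}(\overline G)=T_1\times\cdots\times T_k$ with $k\geq 2$, then $\pi({\rm Soc}(\overline G))$ is indeed a clique, and by Feit--Thompson every $|T_i|$ is even, so this clique lies in $\pi_1$; but that by itself does not ``fuse the components'' or contradict $t(G)\geq 2$, because the odd components $\pi_i$, $i\geq 2$, could a priori be realized by elements of $\overline G$ inducing outer automorphisms of the socle, i.e.\ by primes dividing $|\overline G|$ but not $|{\rm Soc}(\overline G)|$. Closing this hole --- and proving the assertions you leave as gestures, namely that $H$ is a \emph{nilpotent} $\pi_1$-group and that $G/K$ is a $\pi_1$-group --- requires the fixed-point machinery you never name: for $p$ in an odd component and $x$ of order $p$ acting coprimely on ${\rm Soc}(\overline G)$ (or on $H$), any nontrivial fixed point creates an edge from $p$ into $\pi_1$, so the action must be fixed-point-free, and Thompson's theorem (a finite group admitting a fixed-point-free automorphism of prime order is nilpotent) then forbids this for the non-nilpotent socle, forcing $p\mid |{\rm Soc}(\overline G)|$, hence $k=1$, and simultaneously yields the nilpotency of $H$. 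Without Thompson's theorem (plus Feit--Thompson) invoked explicitly, your sketch does not go through. One smaller slip: in the order-component transfer you cite ``adjacencies inside $K/H$ survive in ${\rm GK}(G)$,'' but the direction actually needed is the reverse --- an edge $p\sim q$ of ${\rm GK}(G)$ with $p,q\in\pi_i$, $i\geq 2$, must descend to $K/H$; this does hold, since for $x$ of order $pq$ one has $\langle x\rangle\cap H=1$ and trivial image in $G/K$ (both being $\pi_1$-groups), so $x$ injects into $K/H$, $\pi_i(G)$ stays connected in ${\rm GK}(K/H)$, and the odd order components match as claimed.
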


\begin{lm}\label{l7} (\cite{Cre})
The only solution of the equation $p^m-q^n = 1$, where $p, q$ are
primes and $m, n>1$ are integers, is $(p, q, m, n)=(3, 2, 2, 3)$.
\end{lm}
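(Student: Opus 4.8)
The plan is to prove this classical Diophantine statement by entirely elementary means, reducing everything to a short case analysis driven by parity together with the standard factorizations of $x^k-1$ and $x^k+1$. Rewrite the equation as $p^m=q^n+1$ with $m,n\geq 2$. First I would determine which of $p,q$ can be even. If both $p$ and $q$ are odd, then $p^m-q^n$ is a difference of two odd numbers, hence even, and cannot equal $1$; and $p=q=2$ forces $2^m-2^n=1$, which is impossible for $m,n\geq 2$ since the left-hand side is even. Thus exactly one of the two primes equals $2$.

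Next I would rule out the case $p=2$. Here $q^n=2^m-1$ with $q$ odd. If $n$ is odd, the factorization $q^n+1=(q+1)(q^{n-1}-q^{n-2}+\cdots-q+1)$ exhibits an odd second factor exceeding $1$ (it is a sum of $n$ odd terms with $n$ odd), which cannot divide the power of two $2^m$. If $n$ is even, then $q^n\equiv 1\pmod 8$, so that $2^m=q^n+1\equiv 2\pmod 8$; but $q^n+1\geq 10$ forces $m\geq 4$, whence $8\mid 2^m$, a contradiction. Hence $p=2$ is impossible, and necessarily $q=2$.

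It then remains to solve $p^m=2^n+1$ with $p$ an odd prime. If $m$ is odd, the factorization $p^m-1=(p-1)(p^{m-1}+\cdots+p+1)$ again produces an odd factor greater than $1$ dividing $2^n$, a contradiction. So $m$ is even, say $m=2k$, and then $2^n=p^{2k}-1=(p^k-1)(p^k+1)$ writes a power of two as a product of two powers of two differing by $2$. The only such pair is $\{2,4\}$, which forces $p^k=3$, hence $p=3$, $k=1$, $m=2$, and finally $2^n=8$, i.e.\ $n=3$. This yields the unique solution $(p,q,m,n)=(3,2,2,3)$.

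The argument requires no deep input; it is a pre-Catalan elementary result. The only step demanding a moment's thought is the last one, where I rely on the elementary fact that two powers of $2$ cannot differ by $2$ unless they are $2$ and $4$. Everything else is parity bookkeeping and the standard factorizations of $x^k\pm 1$, so I do not expect any genuine obstacle; the main care is organizational, namely keeping the parity subcases of $m$ and $n$ straight and confirming that each ``odd factor exceeding $1$ divides a power of two'' contradiction really does hold for all admissible $m,n\geq 2$.
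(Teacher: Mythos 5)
Your proof is correct, but there is nothing in the paper to compare it against: the paper does not prove this lemma at all, it simply quotes it from Crescenzo's article \cite{Cre}, as is standard in this literature. Your argument is the classical elementary one (the prime-base case of Catalan's equation, sometimes attributed to Gerono), and every step checks out: parity correctly forces exactly one of $p,q$ to equal $2$; in the case $p=2$, for odd $n$ the cofactor $(q^n+1)/(q+1)$ is a sum of an odd number of odd terms, hence an odd divisor of $2^m$ exceeding $1$, while for even $n$ one has $q^n\equiv 1\pmod 8$, so $2^m\equiv 2\pmod 8$, contradicting $m\geq 4$ (which follows from $2^m=q^n+1\geq 10$); in the case $q=2$, odd $m$ dies by the same odd-cofactor argument, and even $m=2k$ gives $2^n=(p^k-1)(p^k+1)$, a product of two divisors of $2^n$ --- hence two powers of $2$ --- differing by $2$, which forces the pair $\{2,4\}$, i.e.\ $p^k=3$ and the unique solution $(3,2,2,3)$. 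The only points you use implicitly and might state explicitly are that any divisor of $2^n$ is itself a power of $2$, and that $p^k-1\geq 2$ since $p$ is odd (ruling out the degenerate factor $1$). In terms of trade-offs: the paper's citation buys brevity and defers to a published result covering the equation in the generality Crescenzo treats, whereas your route makes the article self-contained at the cost of half a page; for the specific statement actually used in the paper the two are interchangeable, and your version has the merit of requiring no external input.
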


Given a natural number $B$ and a prime number $t$, we denote by
$B_t$ the $t$-part of $B$, that is the largest power of $t$
dividing $B$.

\begin{lm}\label{l8} (\cite{Chen})
Let $B=(2^2-1)(2^4-1)\cdots (2^{2n}-1)$. If $t$ is a prime divisor
of $B$, then $B_t<2^{3n}$. Furthermore, if $t\geq 5$ then
$B_t<2^{2n}$.
\end{lm}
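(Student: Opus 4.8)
The plan is to compute the exact power of $t$ dividing $B$ by the Lifting-the-Exponent lemma and then estimate it. First observe that every factor $2^{2i}-1$ is odd, so any prime divisor $t$ of $B$ satisfies $t\ge 3$; fix such a $t$ and put $d=e(t,2)$, the multiplicative order of $2$ modulo $t$, so that $d\ge 2$ (and $d=2$ forces $t=3$, since then $t\mid 2^{2}-1=3$). By definition $t\mid 2^{2i}-1$ exactly when $d\mid 2i$, and writing $k=d/\gcd(d,2)$ this is equivalent to $k\mid i$; hence exactly $N:=\lfloor n/k\rfloor$ of the $n$ factors are divisible by $t$. Denoting by $v_t(\cdot)$ the exponent of $t$, so that $B_t=t^{v_t(B)}$, the quantity to be bounded is $v_t(B)=\sum_{i=1}^{n}v_t(2^{2i}-1)$.

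Next I would put it in closed form. Set $\alpha=v_t(2^d-1)\ge 1$. For an index $i=jk$ with $1\le j\le N$ one has $2i/d=j\cdot\big(2/\gcd(d,2)\big)\in\{j,2j\}$, and applying the Lifting-the-Exponent lemma to $2^{2i}-1=(2^{d})^{2i/d}-1$ gives $v_t(2^{2i}-1)=\alpha+v_t(2i/d)=\alpha+v_t(j)$, the last step because $t$ is odd. Summing over $j$ yields the clean identity $v_t(B)=N\alpha+v_t(N!)$. Since $v_t(N!)<N/(t-1)$ and $N\le n/k=n\gcd(d,2)/d$, the problem reduces to controlling the product $N\alpha$ together with this tail.

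The decisive input is a bound on $\alpha$, and here the two parities of $d$ behave differently, which is where the main obstacle lies. When $d$ is odd I would just use $t^{\alpha}\le 2^{d}-1<2^{d}$, i.e. $\alpha\log_2 t<d$; with $k=d$ this already gives $N\alpha\log_2 t<n$, and the tail is a small multiple of $n$, so both target bounds hold with room to spare. The hard case is $d$ even: then $k=d/2$ makes $N$ about $2n/d$, and the crude estimate $\alpha\log_2 t<d$ only yields $N\alpha\log_2 t<2n$, leaving no slack for the $2^{2n}$ bound. The key observation that repairs this is that $e(t,2)=d$ forces $t\nmid 2^{d/2}-1$, so in the factorization $2^{d}-1=(2^{d/2}-1)(2^{d/2}+1)$ the entire $t$-part lies in the second factor; thus $t^{\alpha}\mid 2^{d/2}+1$ and $\alpha\log_2 t<\tfrac{d}{2}+1$. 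This halves the dominant term, giving $N\alpha\log_2 t<n+2n/d\le\tfrac32 n$ as soon as $d\ge 4$, and adding the tail $v_t(N!)\log_2 t<\tfrac{2n\log_2 t}{d(t-1)}$ keeps $\log_2 B_t<2n$ for every $t\ge 5$.

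Finally I would assemble the estimates. For the universal bound $B_t<2^{3n}$ the inequality $\alpha\log_2 t<d$ already suffices in all cases: it reduces the claim to $\gcd(d,2)\big(1+\tfrac{\log_2 t}{d(t-1)}\big)<3$, and using $\gcd(d,2)\le 2$, $d\ge 2$, $t\ge 3$ the left side is at most $2\big(1+\tfrac{\log_2 3}{4}\big)<3$; the borderline value $t=3$ (where $d=2$, $\alpha=1$, and $v_3(B)=n+v_3(n!)<\tfrac32 n$) is checked directly. For the sharper bound $B_t<2^{2n}$ with $t\ge 5$, combining the odd-$d$ estimate with the refined even-$d$ estimate above finishes the proof, the extremal instance being $t=5$, $d=4$, which I would verify explicitly as the tightest case.
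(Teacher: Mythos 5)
The paper gives no proof of this lemma at all: it is quoted as an external result from Shi and Chen \cite{Chen}, so there is no internal argument to compare yours against, and your proposal must be judged on its own; it stands up. Your exact formula $v_t(B)=N\alpha+v_t(N!)$, with $k=d/\gcd(d,2)$, $N=\lfloor n/k\rfloor$ and $\alpha=v_t(2^d-1)$, is correct: $t\mid 2^{2i}-1$ iff $k\mid i$, and for $i=jk$ the lifting-the-exponent lemma (applicable since $t$ is odd and $t\mid 2^d-1$, $t\nmid 2$) gives $v_t(2^{2i}-1)=\alpha+v_t(2j/\gcd(d,2))=\alpha+v_t(j)$. You also correctly isolate the one genuinely delicate point: for even $d$ the crude bound $t^\alpha\le 2^d-1$ only yields $\log_2 B_t<2n$ with no slack, and the repair — that $e(t,2)=d$ forces $t\nmid 2^{d/2}-1$, so $t^\alpha\mid 2^{d/2}+1$ and $\alpha\log_2 t<\tfrac d2+1$ — is exactly what makes the $t\ge 5$ case close: since $d=2$ forces $t=3$, one has $d\ge 4$ there, whence $\log_2 B_t<n+\tfrac{2n}{d}+\tfrac{2n\log_2 t}{d(t-1)}\le \tfrac{3n}{2}+\tfrac{n\log_2 5}{8}<2n$, while odd $d\ge 3$ gives $\log_2 B_t<n+\tfrac{n\log_2 t}{3(t-1)}<2n$ outright. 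The universal bound also checks: $\log_2 B_t<n\gcd(d,2)\bigl(1+\tfrac{\log_2 t}{d(t-1)}\bigr)\le 2n\bigl(1+\tfrac{\log_2 3}{4}\bigr)<3n$, legitimately, because $\log_2 t/(t-1)$ is decreasing for $t\ge 3$ so the worst constants $t=3$, $d=2$ can be inserted independently, and your direct check of $t=3$ (where $v_3(B)=n+v_3(n!)<\tfrac{3n}{2}$, so $B_3<2^{2.38n}$) confirms the extremal case. In fact your argument proves more than the cited lemma: it gives the sharper exponents roughly $2.38n$ for $t=3$ and below $1.8n$ for $t\ge 5$, with the exact $t$-adic valuation of $B$ as a by-product, which the black-box citation in the paper cannot offer.
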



\section{Proof of Theorem \ref{th1}}
Throughout this section, we will assume that $2^p-1>7$ is a
Mersenne prime and $C=C_p(2)$. Suppose that $G$ is a group with
the same order and degree pattern as $C$, that is
$$|G|=|C|=2^{p^2}\prod_{i=1}^{p}(2^{2i}-1) \ \ \mbox{and} \ \ {\rm D}(G)={\rm D}(C).$$
Note that, according to the results summarized in \cite{k}, we
have
$$\pi_1(C)=\pi\Big(2(2^p+1)\prod_{i=1}^{p-1}(2^{2i}-1)\Big) \ \ \
\mbox{and} \ \ \ \pi_2(C)=\{2^p-1\}.$$ By our hypothesis, it is
easy to see that
$$ \pi_2(G)=\pi_2(C)=\{2^p-1\} \ \ {\rm
and} \ \ \pi(G)=\pi(C)=\pi_1(C)\cup \{2^p-1\},$$ and so $t(C)=2$.
First of all, we notice that $2^p-1$ is the largest prime in
$\pi(G)=\pi(C)$. Moreover, it follows from Corollary \ref{2od}
that
$${\rm deg}_G(3)=\deg_C(3)=|\pi_1(C)|-1,$$ and this forces
$\pi_1(G)=\pi_1(C)$ and $t(G)=2$. Hence, we have $${\rm
OC}(G)={\rm
OC}(C)=\left\{2^{p^2}(2^p+1)\prod_{i=1}^{p-1}(2^{2i}-1), \ \
2^p-1\right\},$$ and from Lemma \ref{l5}, the group $G$ is neither
a Frobenius group nor a $2$-Frobenius group. Finally, Lemma
\ref{l6}, reduces the problem to the study of the simple groups.
Indeed, by Lemma \ref{l6}, there is a normal series $1\unlhd
H\lhd K\unlhd G$ of $G$ such that:
\begin{itemize}
\item[$(1)$]  $H$ is a nilpotent $\pi_1$-group, $K/H$ is a non-abelian
simple group and $G/K$ is a  $\pi_1$-group. Moreover, we have
$$K/H\leqslant G/H \leqslant {\rm Aut}(K/H),$$ and $t(K/H)\geqslant
t(G)\geqslant 2$,
\item[$(2)$] $2^p-1$ is the only odd order component of $G$ which
is equal to one of those of the qutient $K/H$,
\item[$(3)$] $|G/K|$ divides $|{\rm Out}(K/H)|$.
\end{itemize}

Now we will continue the proof step by step.

\begin{step} $K/H\ncong {^2A_3(2)}, {^2F_4(2)'}, {^2A_5(2)}, E_7(2), E_7(3), A_2(4), {^2E_6(2)}$
 or one of the sporadic simple groups. \end{step}

Note that either the odd order components of above groups are not
equal to a Mersenne prime $2^p-1>7$ or their orders does not
divide the order of $G$.

\begin{step} $K/H\ncong \mathbb{A}_n$, where $n$ and $n-2$ are both prime numbers.
\end{step}
In this case, it follows that $n=2^p-1$. Now, simple computations
show that
$$\left(\frac{n!}{2}\right)_2=2^{\big(\big[\frac{n}{2}\big]+
\big[\frac{n}{2^2}\big]+ \cdots \big)-1}=2^{2^p-p-2}.$$

Now, if $p>5$, then $2^p-p-2>p^2$ and hence the $2$-part of
$|A_n|$ does not divide the $2$-part of $|G|$, a contradiction.
In the case when $p=5$, then $n=31$ and $|K/H|=(31!)/2$, which
does not divide $|G|=|C_5(2)|=2^{25}\cdot 3^6\cdot 5^2\cdot
7\cdot 11\cdot 17\cdot 31$, again a contradiction.

\begin{step} $K/H\ncong \mathbb{A}_n$, where $n=q, q+1, q+2$
($q$ is a prime number), and one of $n$, $n-2$ is not prime.
\end{step}

Here, $\{q\}$ is the only odd order component of $K/H$, and so
$q=2^p-1$. Now, we consider the alternating group $\mathbb{A}_q$
which is a subgroup of $K/H\cong \mathbb{A}_n$. Similar arguments
as those in the previous step, on the subgroup $\mathbb{A}_q$
instead of $\mathbb{A}_n$, lead us a contradiction.
\begin{step}
$K/H$ is isomorphic neither ${^2E}_6(q)$, $q>2$, or $E_6(q)$.
\end{step}
We deal with ${^2E}_6(q)$, $q>2$, the proof for $E_6(q)$ being
quite similar. Suppose $K/H\cong {^2E}_6(q)$. First of all, we
recall that
$$|{^2E}_6(q)|=\frac{1}{(3,q+1)}q^{36}(q^{12}-1)(q^9+1)(q^8-1)(q^6-1)(q^5+1)(q^2-1).$$
Considering the only odd order component of ${^2E}_6(q)$, that is
$(q^6-q^3+1)/(3, q+1)$, we must have $(q^6-q^3+1)/(3,
q+1)=2^p-1$, which implies that $q^9>2^p$, or equivalently
$q^{36}>2^{4p}$. Let $q=r^f$. If $r$ is an odd prime, then from
Lemma \ref{l8}, we get
$$q^{36}=r^{36f}=|K/H|_r\leqslant |G|_r<2^{3p},$$
which is a contradiction. Therefore we may assume that $r=2$. In
this case, we have $(2^{6f}-2^{3f}+1)/(3, 2^f+1)=2^p-1$. Now, if
$(3, 2^f+1)=1$, then we obtain $2^{3f}(2^{3f}-1)=2(2^{p-1}-1)$,
from which we deduce that $3f=1$, a contradiction. In the case
where $(3, 2^f+1)=3$, an easy calculation shows that
$2^{3f}(2^{3f}-1)=2^2(3\cdot 2^{p-2}-1)$, and so $3f=2$, which is
again a contradiction.
\begin{step}
$K/H\ncong F_4(q)$, where $q$ is an odd prime power.
\end{step}
We remark that $q^4-q^2+1$ is the only odd order component of
$F_4(q)$, and clearly this forces $q^4-q^2+1=2^p-1$. Then
$q^2(q^2-1)=2(2^{p-1}-1)$, which shows that $2(2^{p-1}-1)$ is
divisible by $4$, a contradiction.

\begin{step}
$K/H\ncong {^2B}_2(q)$, where $q=2^{2m+1}>2$.
\end{step}
Recall that $|{^2B}_2(q)|=q^{2}(q^2+1)(q-1)$ and the odd order
components of ${^2B}_2(q)$ are
$$q-1, \ \ \ q-\sqrt{2q}+1, \ \ \ q+\sqrt{2q}+1.$$

If $q-1=2^p-1$, then $q=2^p$. Now, we consider the primitive
prime divisor $r\in {\rm ppd}(2^{4p}-1)$. Clearly $r\in
\pi(2^{2p}+1)$, and so $r\in \pi({^2B}_2(q))\subseteq \pi(G)$.
This is a contradiction.

In the case when $q-\sqrt{2q}+1=2^p-1$ (resp.
$q+\sqrt{2q}+1=2^p-1$), by simple computations we obtain
$2^{m+1}(2^m-1)=2(2^{p-1}-1)$ (resp.
$2^{m+1}(2^m+1)=2(2^{p-1}-1)$), a contradiction.
\begin{step}
$K/H\ncong E_8(q)$, where $q\equiv 2, 3 \pmod{5}$.
\end{step}
The odd order components of $E_8(q)$ in this case are
$$q^8-q^4+1, \ \ \ \ \ \frac{q^{10}+q^5+1}{q^2+q+1}, \ \ \ \ \ \frac{q^{10}-q^5+1}{q^2-q+1}.$$

If $q^8-q^4+1=2^p-1$, then we obtain
$q^4(q-1)(q+1)(q^2+1)=2(2^{p-1}-1)$. However, the left hand side
is divisible by 16, while the right hand side is divisible by 2,
an impossible.

If $(q^{10}+ q^5+1)/(q^2+ q+1)=2^p-1$, then after subtracting 1
from both sides of this equation and some simple computations, we
obtain
$$q(q-1)(q+1)(q^2+1)(q^3+q^2-1)=2(2^{p-1}-1).$$ Now, if $q$ is odd, then
the left hand side is divisible by 16, a contradiction. Moreover,
if $q$ is even, then it follows that $q=2$, and if this is
substituted in above equation we get $83=2^{p-2}$, a
contradiction.

The case $(q^{10}-q^5+1)/(q^2-q+1)=2^p-1$ is quite similar to the
previous case and it is omitted.
\begin{step}
$K/H\ncong E_8(q)$, where $q\equiv 0, 1, 4 \pmod{5}$.
\end{step}
The odd order components of $E_8(q)$ in this case are
$$\frac{q^{10}+1}{q^2+1}, \ \ \ q^8-q^4+1, \ \ \
\frac{q^{10}+q^5+1}{q^2+q+1}, \ \ \ \frac{q^{10}-q^5+1}{q^2-
q+1}.$$

Consider the first case. Let $(q^{10}+1)/(q^2+1)=2^p-1$.
Subtracting 1 from both sides of this equality, we get
$$q^2(q^2-1)(q^4+1)=2(2^{p-1}-1),$$
which implies $2(2^{p-1}-1)$ is divisible by $4$, a contradiction.

Similarly, if $q^8-q^4+1=2^p-1$, we obtain
$q^4(q-1)(q+1)(q^2+1)=2(2^{p-1}-1)$, which shows that
$2(2^{p-1}-1)$ is divisible by $16$, a contradiction.

Similar arguments work if $(q^{10}+ q^5+1)/(q^2+ q+1)=2^p-1$ or
$(q^{10}- q^5+1)/(q^2- q+1)=2^p-1$, and we omit the details.
\begin{step}
$K/H\ncong {^2F}_4(q)$, where $q=2^{2m+1}>2$.
\end{step}
The odd order components of ${^2F_4(q)}$ are
$q^2+\sqrt{2q^3}+q+\sqrt{2q}+1$ and
$q^2-\sqrt{2q^3}+q-\sqrt{2q}+1$. Therefore, we have
$q^2+\sqrt{2q^3}+q+\sqrt{2q}+1=2^p-1$ or
$q^2-\sqrt{2q^3}+q-\sqrt{2q}+1=2^p-1$. However, if $2^{2m+1}$ is
substituted in these equations we obtain $2^{m+1}(2^{3m+1}\pm
2^{2m+1}\pm 2^m \pm 1)=2(2^{p-1}-1)$, which is a contradiction.

\begin{step}
$K/H\ncong F_4(q)$, where $q=2^m$.
\end{step}
The odd order components of $F_4(q)$ are $q^4+1$ and $q^4-q^2+1$.
It is easy to see that in both cases, $2^{2m}$ divides
$2(2^{p-1}-1)$, a contradiction.

\begin{step}
$K/H\ncong {^2G_2(q)}$, where $q=3^{2m+1}>3$.
\end{step}
The odd order components of ${^2G_2(q)}$ are $q+\sqrt{3q}+1$ and
$q-\sqrt{3q}+1$. If $q-\sqrt{3q}+1=2^p-1$, then $q^3>2^{3p}$,
while Lemma \ref{l8} shows that $q^3<2^{3p}$, which is a
contradiction. If $q+\sqrt{3q}+1=2^p-1$, then
\begin{equation}\label{e1}
2^p-2= 2(2^{\frac{p-1}{2}}-1)(2^{\frac{p-1}{2}}+1)=3^{m+1}(3^m+1).
\end{equation}
First of all, we recall that $(2^{\frac{p-1}{2}}-1,
2^{\frac{p-1}{2}}+1)=1$. Now we consider two cases separately:
\begin{itemize}
\item[$(i)$] If $3^{m+1}$ divides $2^{\frac{p-1}{2}}-1$, then
 $$3^m+1 < 3^{m+1}\leq 2^{\frac{p-1}{2}}-1 <
2^{\frac{p-1}{2}}+1.$$
Hence, we obtain
$$3^{m+1}(3^m+1)<2(2^{\frac{p-1}{2}}-1)(2^{\frac{p-1}{2}}+1),$$
a contradiction.
\item[$(ii)$] If $3^{m+1}$ divides $2^{\frac{p-1}{2}}+1$, then $2^{\frac{p-1}{2}}+1=k \cdot 3^{m+1}$
where $k$ is a natural number. Now, from Eq. (\ref{e1}), it
follows that $$2k(2^{\frac{p-1}{2}}-1)=3^m+1,$$ and consequently
$3^m\geq 2^{\frac{p+1}{2}}-1$. Therefore we have
$$2^{\frac{p+1}{2}}-1\leq 3^m<3^{m+1}\leq 2^{\frac{p-1}{2}}+1,$$
a contradiction.
\end{itemize}
\begin{step}
$K/H\ncong G_2(q)$, where $q=3^m$.
\end{step}
Recall that the odd order components of $G_2(q)$ are $q^2-q+1$ and
$q^2+q+1$. If $q^2-q+1=2^p-1$ then $q^6>2^{3p}$, while one can
follow from Lemma \ref{l8} that $q^6<2^{3p}$, which is a
contradiction. If $q^2+q+1=2^p-1$, then $q(q+1)\equiv 2
\pmod{4}$, which forces $m$ is even. But then, it is obvious that
$2^p-2=q(q+1)\equiv 2\pmod{8}$, a contradiction.
\begin{step}
$K/H\ncong {^2D}_r(3)$, where $r=2^m+1$ is a prime number and
$m\geq 1$.
\end{step}
Recall that
$$|{^2D_r(3)}|=\frac{1}{(4, 3^r+1)}3^{r(r-1)}(3^r+1)\prod\limits_{i=1}^{r-1}(3^{2i}-1),$$
and the odd order components of ${^2D_r(3)}$ are
$$\frac{3^{r-1}+1}{2} \ \ \ \  \mbox{and} \ \ \ \ \  \frac{3^r+1}{4}.$$
In the case when $(3^{r-1}+1)/2=2^p-1$, adding 1 to both sides of
this equality, we obtain $3(3^{r-2}+1)=2^{p+1}$, which is a
contradiction. If $(3^r+1)/4=2^p-1$, then $r\geqslant 5$ because
$p\geqslant 5$. Moreover, on the one hand, from last equation we
obtain $3^r=2^{p+2}-5>2^{p+1}$, which implies that
$$3^{r(r-1)}>2^{(p+1)(r-1)}>2^{4(p+1)}.$$
On the other hand, it follows from Lemma \ref{l8} that
$$3^{r(r-1)}=|K/H|_3\leqslant |G|_3<2^{3p},$$
which is a contradiction.
\begin{step}
$K/H\ncong B_n(q)$, where $n=2^m\geq 4$ and $q=r^f$ is an odd
prime power.
\end{step}
Note that
$$|B_n(q)|=\frac{1}{(2,q-1)}q^{n^2}\prod\limits^n_{i=1}(q^{2i}-1),$$
and the only odd order component of $B_n(q)$ is $(q^n+1)/2$. If
$(q^n+1)/2=2^p-1$, then $q^n=2^{p+1}-3>2^p$ and clearly $q$ is not
divisible by $2$ and $3$. Since $p\geq 5$ and $n\geqslant 4$, it
is easy to see that
$$q^{n^2}>q^{3n}>2^{3p}>2^{2p}.$$  On the other hand, by Lemma \ref{l8}, we
obtain $$q^{n^2}=|K/H|_r\leqslant |G|_r<2^{2p},$$ which is a
contradiction.
\begin{step}
$K/H\ncong B_r(3)$.
\end{step}
The only odd order component of $B_r(3)$ is $(3^r-1)/2$. If
$(3^r-1)/2=2^p-1$, then $2^{p+1}-3^r=1$. However, this equation
has no solution by Lemma \ref{l7}, which is impossible.
\begin{step}
$K/H\ncong {^3D}_4(q)$.
\end{step}
We recall that $q^4-q^2+1$ is the only odd order component of
${^3D}_4(q)$, and so $q^4-q^2+1=2^p-1$. But then,
$q^2(q^2-1)=2(2^{p-1}-1)$, which shows that $2(2^{p-1}-1)$ is
divisible by $4$, a contradiction.
\begin{step}
$K/H\ncong G_2(q)$, where $2<q\equiv \pm 1 \pmod{3}$.
\end{step}
In this case, the odd order components of $G_2(q)$ are $q^2+q+1$
and $q^2-q+1$. Let $q=r^f$. If $q^2+q+1=2^p-1$, then
$q(q+1)=2(2^{p-1}-1)$, which shows that $q>2$ is not a power of
$2$. Moreover, since $q-1\geqslant 2$, we obtain
$$q^3-1=(q-1)(q^2+q+1)\geqslant 2(2^p-1),$$
and so $q^3\geqslant 2^{p+1}-1>2^p$, which yields that $q^6>
2^{2p}$. However, since $|G_2(q)|=q^6(q^2-1)(q^6-1)$, from Lemma
\ref{l8}, we conclude that $$q^6=|K/H|_r\leqslant
|G|_r<2^{2p},$$  which is a contradiction.

The case when $q^2-q+1=2^p-1$ is similar and left to the reader.
\begin{step}
$K/H\ncong {^2D}_n(3)$, where $n=2^m+1$ which is not a prime and
$m\geq 2$.
\end{step}
The odd order component of ${^2D}_n(3)$ is $(3^{n-1}+1)/2$. If
$(3^{n-1}+1)/2=2^p-1$, then $2^{p+1}=3(3^{n-2}-1)$, a
contradiction.
\begin{step}
$K/H\ncong {^2D}_r(3)$, where $r\geqslant 5$ is a prime and
$r\neq 2^m+1$.
\end{step}
We recall that
$$|{^2D}_r(3)|=\frac{1}{(4,
3^r+1)}3^{r(r-1)}(3^r+1)\prod\limits_{i=1}^{r-1}(3^{2i}-1).$$
Moreover, the only odd order component of ${^2D}_r(3)$ is
$(3^r+1)/4$. Let $(3^r+1)/4=2^p-1$. An easy computation shows that
$3^r=2^{p+2}-5>2^{p+1}$. Moreover, we note that $r-1\geqslant 4$,
and so
 $$3^{r(r-1)}\geqslant 3^{4r}>2^{4(p+1)}.$$ On the other hand, by Lemma
\ref{l8}, we obtain  $$3^{r(r-1)}=|K/H|_3\leqslant |G|_3<2^{3p},$$
which is a contradiction.
\begin{step}
$K/H\ncong {^2D}_n(2)$, where $n=2^m+1$, $m\geq 2$.
\end{step}
The only odd order component of ${^2D}_n(2)$ is $2^{n-1}-1$. If
$2^{n-1}-1=2^p-1$, then $n-1=p$ and $2^m=p$, an impossible.
\begin{step}
$K/H\ncong {^2D}_n(q)$, where $n=2^m\geq 4$ and $q=r^f$.
\end{step}
Recall that
$$|^2D_n(q)|=\frac{1}{(4,q^n+1)}q^{n(n-1)}(q^n+1)\prod\limits_{i=1}^{n-1}(q^{2i}-1),$$
and the only odd order component of ${^2D}_n(q)$ is $(q^n+1)/(2,
q+1)$. Then $(q^n+1)/(2, q+1)=2^p-1$. Assume first that $(2,
q+1)=1$. In this case, we obtain $q^n=2(2^{p-1}-1)$, a
contradiction. Assume next that $(2, q+1)=2$. Again, using simple
calculations we obtain $q^n=2^{p+1}-3>2^p$ and so $q$ cannot be a
power of $2$. Thus, since $n-1\geqslant 3$, $q^{n(n-1)}\geqslant
q^{3n}>2^{3p}$. However, Lemma \ref{l8} shows that
$$q^{n(n-1)}=|K/H|_r\leqslant |G|_r<2^{3p},$$ which is a
contradiction.
\begin{step}
$K/H\ncong D_{r+1}(q)$, where $q=2, 3$.
\end{step}
The only odd order component of $D_{r+1}(q)$ is $(q^{r}-1)/(2,
q-1)$, and so $(q^r-1)/(2, q-1)=2^p-1$. If $(2, q-1)=1$, then
$r=p$ and $q=2$, and we have $$|K/H|=|D_{p+1}(2)|=\frac{1}{(4,
2^{p+1}-1)}2^{p(p+1)}(2^{p+1}-1)\prod_{i=1}^{p}(2^{2i}-1),$$ this
shows that $|K/H|_2$ does not divide $|G|_2$, which is a
contradiction. In the case when $(2, q-1)=2$, we have the equation
$2^{p+1}-3^r=1$, which has no solution for $p\geqslant 5$, by
Lemma \ref{l7}. This is the final contradiction.
\begin{step}
$K/H\ncong D_r(q)$, where $q=2, 3, 5$ and $r\geq 5$.
\end{step}
We recall that the only odd order component of $D_r(q)$ is
$(q^r-1)/(q-1)$. We distinguish three cases separately.
\begin{itemize}
\item[$(i)$] $q=2$. In this case, we have $2^r-1=2^p-1$, and so $r=p$ and
$$|K/H|=|D_{p}(2)|=2^{p(p-1)}(2^p-1)\prod_{i=1}^{p-1}(2^{2i}-1).$$ Note that $|{\rm Out}(D_p(2))|=2$ and
$D_p(2)\leqslant G/H\leqslant {\rm Aut}(D_p(2))$. Now,
considering the order of groups, we get $|H|=2^\alpha(2^p+1)$
where $p-1\leq \alpha\leq p$. Let $r\in {\rm ppd}(2^{2p}-1)$ and
$Q\in {\rm Syl}_r(H)$. Clearly $r\in \pi(2^p+1)$, $Q$ is a normal
$\pi_1$-subgroup of $G$ and $|Q|$ divides $2^p+1$. Now, from Lemma
\ref{l4}, it follows that $|Q|-1$ is divisible by $m_2(G)=2^p-1$,
and so $|Q|-1\geqslant 2^p-1$ or equivalently $|Q|\geqslant 2^p$.
This forces $|Q|=2^p+1$. But then $m_2(G)=2^p-1$ does not divide
the value $|Q|-1=2^p$, which is a contradiction.
\item[$(ii)$] $q=3$. In this case, from the equality $(3^r-1)/2=2^p-1$, we deduce that $2^{p+1}-3^r=1$.
However, this equation has no solution when $p\geqslant 5$ by
Lemma \ref{l7}, a contradiction.
\item[$(iii)$] $q=5$. Here $(5^r-1)/4=2^p-1$, and so $5^r=2^{p+2}-3>2^{p+1}$. As before, since $r-1\geqslant 4$,
we obtain $5^{r(r-1)}> 5^{4r}> 2^{4(p+1)}$. On the other hand, we
have
$$5^{r(r-1)}=|K/H|_5\leqslant |G|_5< 2^{2p},$$
by Lemma \ref{l8}, which is a contradiction.
\end{itemize}
\begin{step}
$K/H\ncong C_r(3)$.
\end{step}
The only odd order component of $C_r(3)$ is $(3^r-1)/2$. Thus, if
$(3^r-1)/2=2^{p}-1$, then $2^{p+1}-3^r=1$. However, this equation
has no solution by Lemma \ref{l7}, an impossible.
\begin{step}
$K/H\ncong C_n(q)$, where $n=2^m\geq 2$.
\end{step}
Note that
$$|C_n(q)|=\frac{1}{(2,q-1)}q^{n^2}\prod\limits^n_{i=1}(q^{2i}-1),$$
and the only odd order component of $C_n(q)$ is $(q^n+1)/(2,
q-1)$. Thus $(q^n+1)/(2, q-1)=2^p-1$. If $(2, q-1)=1$, then
$q^n=2(2^{p-1}-1)$, which yields that $q=p=2$ and $n=1$, a
contradiction. If $(2, q-1)=2$, then $q^n=2^{p+1}-3>2^p$, which
implies that $q$ is not a power of $2$ and $3$. Let $q=r^f$. When
$n\geq 4$, it is easy to see that
$$q^{n^2}>q^{3n}>2^{3p}>2^{2(p+1)}.$$ But, from Lemma \ref{l8}, we obtain
$$q^{n^2}=|K/H|_r\leqslant |G|_r<2^{2p},$$
a contradiction. Assume now that $n=2$. In this case, we have
$q^2=2^{p+1}-3$, or equivalently
$$(q-1)(q+1)=2^2(2^{p-1}-1).$$
However, the left hand side is divisible by $8$, while the right
hand side is divisible by 4, a contradiction.
\begin{step}
$K/H\ncong A_1(q)$, where $q=2^m>2$.
\end{step}
The odd order components of $A_{1}(q)$ are $q+1$ and $q-1$. If
$q+1=2^p-1$, then $q=2(2^{p-1}-1)$, a contradiction. If
$q-1=2^p-1$, then $q=2^p$. Moreover, since $A_1(q)\leqslant
G/H\leqslant {\rm Aut}(A_1(q))$, it is easy to see that the order
of $H$ is divisible by $(2^2-1)(2^4-1)\cdots(2^{2(p-1)}-1)$. Let
$r\in {\rm ppd}(2^{2(p-1)}-1)$ and $Q\in {\rm Syl}_r(H)$. Clearly
$Q$ is a $\pi_1$-normal subgroup of $G$ and $|Q|$ divides
$2^{p-1}+1$. On the other hand, from Lemma \ref{l4}, $|Q|-1$ is
divisible by $2^p-1$ which implies that $|Q|\geqslant 2^p$. This
is a contradiction.
\begin{step}
$K/H\ncong A_{1}(q)$, where $3\leqslant q\equiv \pm 1\pmod{4}$ and
$q=r^f$.
\end{step}
Assume first that $3\leqslant q\equiv 1\pmod{4}$.  In this case,
the odd order components of $A_{1}(q)$ are $(q+1)/2$ and $q$. If
$(q+1)/2=2^p-1$, then $r^f=q=2^{p+1}-3$. First of all, we claim
that $f$ is an odd number. Otherwise, we have
$$(r^{\frac{f}{2}}-1)(r^{\frac{f}{2}}+1)=2^2(2^{p-1}-1).$$
But then, the left hand side is divisible by 8, while the right
hand side is divisible by 4, which is a contradiction.
Furthermore, by easy computations we observe that
$$|A_1(q)|=\frac{1}{2}q(q^2-1)=2^2(2^{p+1}-3)(2^{p-1}-1)(2^p-1).$$ On the
other hand, since $|G/K| \cdot |H| =|G|/|A_1(q)|$, we deduce that
$$|G/K|_2 \cdot |H|_2=\frac{|G|_2}{|A_1(q)|_2}=2^{p^2-2}.$$
But since $|G/K|$ divides $|{\rm Out}(A_1(q))|=2f$ and $f$ is odd,
$|G/K|_2$ is at most $2$. Hence, if $S_2\in {\rm Syl}_2(H)$, then
$|S_2|=2^{p^2-2}$ or $|S_2|=2^{p^2-3}$. We notice that $S_2$ is a
normal subgroup of $G$, because $H$ is nilpotent. Now, it follows
from Lemma \ref{l4} that $2^p-1$ divides $2^{p^2-2}-1$ or
$2^{p^2-3}-1$, which is a contradiction. If $q=2^p-1$, we get a
contradiction by Lemma \ref{l7}.

Assume next that $3\leqslant q\equiv -1\pmod{4}$. In this case,
the odd order components of $A_{1}(q)$ are $(q-1)/2$ and $q$. If
$(q-1)/2=2^p-1$, then $2^{p+1}-r^f=1$. Noting Lemma \ref{l7}, we
deduce that $f=1$, and hence $r=2^{p+1}-1$ is a Mersenne prime,
which is a contradiction because $p+1$ is not a prime. The case
when $q=2^p-1$ is similar to the previous paragraph.
\begin{step}
$K/H\ncong A_r(q)$, where $(q-1)\big |(r+1)$.
\end{step}
Recall that
$$|K/H|=|A_r(q)|=\frac{1}{(r+1, q-1)}q^{r(r+1)/2}\prod\limits_{i=2}^{r+1}(q^i-1),$$
and the only odd order component of $A_r(q)$ is $(q^r-1)/(q-1)$,
and so $(q^r-1)/(q-1)=2^p-1$. As a simple observation we see that
$q^r-1\geqslant (q^r-1)/(q-1)=2^p-1$ and so $q^r\geqslant 2^p$.
Let $q=t^f$, where $t$ is a prime number and $f$ is a natural
number.
\begin{itemize}
\item[$(i)$]
Suppose first that $r\geq 7$. Then
$q^{\frac{r(r+1)}{2}}>q^{3(r+1)}\geqslant 2^3q^{3r} \geqslant
2^{3(p+1)}$. Now, if $t$ is an odd prime, then by Lemma \ref{l8}
we obtain
$$q^{r(r+1)/2}=|K/H|_t\leqslant |G|_t< 2^{3p},$$
which is a contradiction. Therefore, we may assume that $t=2$. In
this case, we have $(2^{fr}-1)/(2^f-1)=2^p-1$, from which one can
deduce that $f=1$ and $r=p$. Thus
$$|G/K|\cdot |H|=\frac{2^{p^2}\prod_{i=1}^{p}(2^{2i}-1)}{2^{\frac{p(p+1)}{2}}\prod_{i=2}^{p+1}(2^i-1)}.$$
Since $|G/K|$ divides $|{\rm Out}(K/H)|= |{\rm Out}(A_p(2))|=2$,
we conclude that $|H|$ is divisible by $2^p+1$. Let $s\in
 {\rm ppd}(2^{2p}-1)\subseteq \pi(2^p+1)$ and $Q\in {\rm
Syl}_s(H)$. Clearly $|Q| \big| 2^p+1$. Since $H$ is a normal
$\pi_1$-subgroup of $G$ which is nilpotent, $Q$ is also a normal
$\pi_1$-subgroup of $G$. Now, by Lemma \ref{l4}, $m_2(G)=2^p-1$
divides $|Q|-1$, and so $|Q|\geqslant 2^p$. But, this forces
$|Q|=2^p+1$. However, this contradicts the fact that $2^p-1|2^p$.
\item[$(ii)$]
Suppose next that $r=5$. If $q$ is even, then from
$(q^5-1)/(q-1)=2^p-1$, we obtain $q(q^3+q^2+q+1)=2(2^{p-1}-1)$,
which implies that $q=2$ and $r=p=5$. Therefore, by easy
calculations we see that
$$|G/K|\cdot |H|=\frac{2^{10}\prod_{i=1}^{5}(2^i+1)}{2^6-1},$$
which is not a natural number, a contradiction. If $q$ is odd,
then we get
$$q(q+1)(q^2+1)=q^4+q^3+q^2+q=2^p-2,$$
however $q(q+1)(q^2+1)\equiv 0 \pmod{4}$, while $2^p-2\equiv 2
\pmod{4}$, a contradiction.
\item[$(iii)$]
Finally suppose that $r=3$. Then $q(q+1)=2(2^{p-1}-1)$. First of
all, we note that $q$ is not even, otherwise $p=3$, an impossible.
In addition, we have
\begin{equation}\label{e2}
q(q+1)=2(2^{\frac{p-1}{2}}-1)(2^{\frac{p-1}{2}}+1).
\end{equation}
Now we consider two cases separately: \subitem{$(a)$} If $q$
divides $2^{\frac{p-1}{2}}-1$, then
$$q\leq 2^{\frac{p-1}{2}}-1, \ \ q+1<2^{\frac{p-1}{2}}+1.$$
Hence, we obtain
$$q(q+1)<2(2^{\frac{p-1}{2}}-1)(2^{\frac{p-1}{2}}+1),$$
a contradiction. \subitem{$(b)$} If $q$ divides
$2^{\frac{p-1}{2}}+1$, then $2^{\frac{p-1}{2}}+1=kq$ for some
natural number $k$. Now from Eq. (\ref{e2}), it follows that
$$2k(2^{\frac{p-1}{2}}-1)=q+1.$$
If $k=1$, then $p=q=5$. Hence $13\in \pi(K/H)=\pi(A_3(5))$ but
$13\notin \pi(G)=\pi(C_5(2))$, a contradiction. Thus $k\geq 2$
and we obtain
$$2(2^{\frac{p+1}{2}}-2)-1\leq q<q+1\leqslant kq=2^{\frac{p-1}{2}}+1,$$
a contradiction.
\end{itemize}
\begin{step}
$K/H\ncong A_{r-1}(q)$, where $(r, q)\neq (3, 2), (3, 4)$.
\end{step}
Again, we recall that
$$|K/H|=|A_{r-1}(q)|=\frac{1}{(r, q-1)}q^{r(r-1)/2}\prod\limits_{i=2}^{r}(q^i-1),$$
and the only odd order component of $A_{r-1}(q)$ is
$(q^r-1)/(q-1)(r, q-1)$. Hence, we must have  $$(q^r-1)/(q-1)(r,
q-1)=2^p-1,$$  which implies that $$q^r-1\geqslant
(q^r-1)/(q-1)(r, q-1)=2^p-1,$$ or equivalently $q^r\geqslant
2^p$. Let $q=t^f$, where $t$ is a prime and $f$ is a natural
number. In what follows, we consider several cases separately.
\begin{itemize}
\item[$(i)$] $r\geq 7$. In this case, we obtain
$$q^{r(r-1)/2}\geqslant q^{3r}\geqslant 2^{3p},$$
and Lemma \ref{l8} implies that $t=2$. Now, Lemma \ref{zsig} shows
that $q=2$ and $r=p$, and hence we obtain
$$|G/K|\cdot |H|=\frac{2^{p^2}\prod_{i=1}^{p}(2^{2i}-1)}
{2^{\binom{p}{2}}\prod_{i=2}^{p}(2^i-1)}=
2^{\frac{p(p+1)}{2}\prod_{i=1}^{p}(2^i+1)}.$$ On the other hand,
$|G/K|$ divides $|{\rm Out}(K/H)|=2$. From this we deduce that
$|H|$ is divisible by $2^p+1$. Let $s\in {\rm
ppd}(2^{2p}-1)\subseteq \pi(2^p+1)$ and $Q\in {\rm Syl}_s(H)$.
Evidently  $Q$ is a normal subgroup of $G$ and $|Q|$ divides
$2^p+1$. Now, it follows from Lemma \ref{l4} that
$m_2(G)=2^p-1\big | |Q|-1$, which is impossible.

\item[$(ii)$] $r=5$. Assume first that $(5, q-1)=1$. In this case, we have
$$\frac{q^5-1}{q-1}=q^4+q^3+q^2+q+1=2^p-1,$$
or equivalently
\begin{equation}\label{equation-3}
q(q+1)(q^2+1)=2(2^{p-1}-1).
\end{equation}
If $q$ is even, then we conclude that $q=2$ and $r=p=5$, and the
proof is quite similar as $(i)$. If $q$ is odd, then the
left-hand side of Eq. (\ref{equation-3}) is congruent to $0
\pmod{4}$, while the right-hand side of Eq. (\ref{equation-3}) is
congruent to $2 \pmod{4}$, a contradiction.

Assume next that $(5, q-1)=5$. In this case, we have
$$q^4+q^3+q^2+q+1=5(2^p-1),$$ or equivalently
$$
(q-1)(q^3+2q^2+3q+4)=10(2^{p-1}-1).
$$
In the case when $q$ is even, it follows that $q=2$ and so $13=5(
2^{p-1}-1)$, a contradiction. Moreover, if $q$ is odd, then from
the equality $q(q+1)(q^2+1)=5\cdot 2^p-6$ it is easily seen that
the left-hand side of this equation is congruent to $0 \pmod{4}$,
while the right-hand side is congruent to $2 \pmod{4}$, a
contradiction.
\item[$(iii)$]
$r=3$. In this case, we have $(q^3-1)/(q-1)(3, q-1)=2^p-1$. First
of all, if $q$ is even, then we obtain $p=3$, which is not the
case. Thus, we can assume that $q$ is odd.

If $(3, q-1)=1$, then
\begin{equation}\label{equation-4}
q(q+1)=2(2^{\frac{p-1}{2}}-1)(2^{\frac{p-1}{2}}+1).
\end{equation}
If $q$ divides $2^{\frac{p-1}{2}}-1$, then
$$q\leq 2^{\frac{p-1}{2}}-1, \ \ q+1<2^{\frac{p-1}{2}}+1.$$
Hence, we obtain
$$q(q+1)<2(2^{\frac{p-1}{2}}-1)(2^{\frac{p-1}{2}}+1),$$
a contradiction. If $q$ divides $2^{\frac{p-1}{2}}+1$, then
$2^{\frac{p-1}{2}}+1=kq$. Now, from Eq. (\ref{equation-4}), it
follows that
$$2k(2^{\frac{p-1}{2}}-1)=q+1.$$
When $k=1$, we conclude that $p=5$ and $q=5$. But then, we have
$|K/H|=|A_2(5)|=2^5\cdot 3\cdot 5^3\cdot 31$, while
$|G|=|C_5(2)|=2^{25}\cdot 3^6\cdot 5^2\cdot 7\cdot 11\cdot
17\cdot 31$, this is a contradiction because $|K/H|_5>|G|_5$. If
$k\geq 2$, then $q\geq 2(2^{\frac{p+1}{2}}-2)-1$. Therefore, we
have
$$2(2^{\frac{p+1}{2}}-2)-1 \leq q < q+1 \leq 2^{\frac{p-1}{2}}+1,$$
a contradiction.

If $(3, q-1)=3$, then $q(q+1)=2^2(3\cdot 2^{p-2}-1)$, which
implies that $4|| q+1$ and so $2|| q-1$. Moreover, under these
conditions, one can easily deduce that $f$ is odd, otherwise
$8|q-1=t^f-1=(t^{\frac{f}{2}}-1)(t^{\frac{f}{2}}+1)$, which is a
contradiction. Thus, we have $|A_2(q)|_2=2^4$, while
$$|G/K|_2\cdot |H|_2=\frac{|G|_2}{|A_2(q)|_2}=2^{p^2-4}.$$
Since $|G/K|$ divides $2f(3, q-1)$ and $f$ is odd,
$|G/K|_2\leqslant 2$. Therefore a Sylow $2$-subgroup of $H$ has
order either $2^{p^2-4}$ or $2^{p^2-5}$. Applying Lemma \ref{l4}
we deduce that $2^p-1|2^{p^2-4}-1$ or $2^p-1|2^{p^2-5}-1$. Now,
one can easily check that the second divisibility is possible
only for $p=5$. But then, we get $q(q+1)=2^2\cdot 23$, which is a
contradiction.
\end{itemize}
\begin{step}
$K/H\ncong {^2A}_r(q)$, where $(q+1)\big |(r+1)$ and $(r, q)\neq
(3, 3)$, $(5, 2)$.
\end{step}
In this case, we have
$$|K/H|=|{^2A}_r(q)|=\frac{1}{(r+1,q+1)}q^{r(r+1)/2}\prod\limits^{r+1}_{i=2}\big(q^i-(-1)^i\big),$$ and the odd order
component of ${^2A}_r(q)$ is $(q^r+1)/(q+1)$, and so
$(q^r+1)/(q+1)=2^p-1$. An argument similar to that in the
previous cases shows that $q^r-1>(q^r+1)/(q+1)=2^p-1$, and so
$q^r>2^p$. Let $q=t^f$. We now consider three cases separately.
\begin{itemize}
\item[$(i)$]
$r\geq 7$. Then $q^{\frac{r(r+1)}{2}}>q^{3(r+1)}\geqslant
2^3q^{3r}>2^{3(p+1)}$, which forces by Lemma \ref{l8} that $t=2$.
Thus $(2^{fr}+1)/(2^f+1)=2^p-1$, and, consequently, $f=1$, $r=3$
and $p=2$, which is a contradiction.

\item[$(ii)$]  If $r=5$, then $(q^5+1)/(q+1)=2^p-1$.
Arguing as in the case $(i)$, we conclude that $t=2$ and $f=1$,
whence $12=2^p$, a contradiction.

\item[$(iii)$] If $r=3$, then $(q^3+1)/(q+1)=2^p-1$. It follows that
$q(q-1)=2(2^{p-1}-1)$, and so $q=p=2$, which is impossible.
\end{itemize}
\begin{step}
$K/H\ncong {^2A}_{r-1}(q)$.
\end{step}
In this case, we have
$$|K/H|=|{^2A}_{r-1}(q)|=\frac{1}{(r, q+1)}q^{r{(r-1)}/2}\prod\limits^{r}_{i=2}\big(q^i-(-1)^i\big),$$ and
the odd order component of ${^2A}_{r-1}(q)$ is $(q^r+1)/(q+1)(r,
q+1)$. Thus $$\frac{q^r+1}{(q+1)(r, q+1)}=2^p-1,$$ As before, we
deduce that $q^r\geq 2^{p}$. Let $q=t^f$. We now consider three
cases separately.
\begin{itemize}
\item[$(i)$]
$r\geqslant 7$. It follows that $q^{r(r-1)/2}\geqslant
q^{3r}>2^{3p}$, which implies that $t=2$ by Lemma \ref{l8}. Now,
we obtain $\frac{2^{fr}+1}{(2^f+1)(r, 2^f+1)}=2^p-1$, which
contradicts Lemma \ref{zsig} because $2^p-1$ is the largest prime
in $\pi(G)$.
\item[$(ii)$]
$r=5$. In this case we have $q^5+1=(q+1)(2^p-1)(5, q+1)$.

Assume first that $q$ is even, that is $q=2^f$. If $(5, q+1)=1$,
then we obtain $2^{5f}=2^{fp}+2^p-2^f-2$, which is impossible. If
$(5, q+1)=5$, then $2^{5f}=5(2^{fp}+2^p-2^f)-6$, which is again a
contradiction.

Assume next that $q$ is odd. Noting that $q(q-1)(q^2+1)=(2^p-1)(5,
q+1)-1$, it is easily seen that the left hand side is congruent to
$0 \pmod{4}$, while the right hand side is congruent to $2
\pmod{4}$, a contradiction.
\item[$(iii)$]
$r=3$. In this case, we have $(q^3+1)/(q+1)(3, q+1)=2^p-1$. If
$(3, q+1)=1$, then we obtain
$$q(q-1)=2^p-2= 2(2^{\frac{p-1}{2}}-1)(2^{\frac{p-1}{2}}+1).$$
If $q$ divides $2$, than $p=2$, a contradiction. If $q$ divides
$2^{\frac{p-1}{2}}-1$ or $2^{\frac{p-1}{2}}+1$, then
$$q(q-1)<2^p-2= 2(2^{\frac{p-1}{2}}-1)(2^{\frac{p-1}{2}}+1),$$
a contradiction. Therefore we may assume that $(3, q+1)=3$. If
$q$ is even, then we conclude that $q=4$, which is a
contradiction. We now suppose that $q$ is odd. Since
$q(q-1)=2^2(3\cdot 2^{p-2}-1)$, it follows that $4|| q-1$, and so
$2|| q+1$. Moreover, under these hypotheses, one can easily deduce
that $f$ is odd, otherwise
$8|q-1=t^f-1=(t^{\frac{f}{2}}-1)(t^{\frac{f}{2}}+1)$, which is a
contradiction. On the other hand, $|G/K|$ divides $f(3, q+1)$ and
since $f$ is odd, $|G/K|_2=1$. Therefore a Sylow $2$-subgroup of
$H$ has order $2^{p^2-4}$. Again, using Lemma \ref{l4}, we see
that $2^p-1| 2^{p^2-4}-1$, which implies that $p=2$. This is a
contradiction.
\end{itemize}
\begin{step}
$K/H\ncong C_r(2)$.
\end{step}
The odd order component of $C_r(2)$ is $2^r-1$. Thus
$2^r-1=2^{p}-1$. It follows that $r=p$, $G/K=1$ and $H=1$, which
means $G\cong C$. This completes the proof of the theorem. $\Box$

\section{Appendix}
In a series of articles, it was shown that many finite simple
groups are OD-characterizable or 2-fold OD-characterizable. Table
1 lists finite simple groups which are currently known to be
$k$-fold OD-characterizable for $k\in \{1, 2\}$. Until recently,
no examples of simple groups $P$ with $h_{\rm OD}(P)\geqslant 3$
were known. Therefore, we posed the following question:
\begin{problem} Is there a non-abelian simple group $P$ with $h_{\rm
OD}(P)\geqslant 3$?
\end{problem}

\begin{center}
{\bf Table 1}. Some non-abelian simple groups $S$ with $h_{\rm
OD}(S)=1$ or $2$.\\[0.4cm]
$\begin{array}{l|l|c|l} \hline S & {\rm Conditions \ on} \ S&
h_{\rm OD}(S) & {\rm Refs.} \\ \hline
 \mathbb{A}_n & \ n=p, p+1, p+2 \ (p \ {\rm a \ prime})& 1 &  \cite{mz1}, \cite{mzd}    \\
 & \ 5\leqslant n\leqslant 100, n\neq 10   & 1 & \cite{hm}, \cite{kogani}, \cite{banoo-alireza},  \\
 & & & \cite{mz3}, \cite{zs-new2} \\
& \ n=106, \ 112 & 1 &      \cite{yan-chen}       \\
& \ n=10 & 2 &      \cite{mz2}       \\[0.2cm]
L_2(q) &  q\neq 2, 3& 1 &    \cite{mz1}, \cite{mzd},\\
& & &  \cite{zshi} \\[0.1cm]
L_3(q) &  \ |\pi(\frac{q^2+q+1}{d})|=1, \ d=(3, q-1) & 1 &   \cite{mzd} \\[0.2cm]
U_3(q) &  \ |\pi(\frac{q^2-q+1}{d})|=1, \ d=(3, q+1), q>5 & 1 &   \cite{mzd} \\[0.2cm]
L_3(9) & & 1 & \cite{zs-new4}\\[0.1cm]
U_3(5) &   & 1 &   \cite{zs-new5} \\[0.1cm]
L_4(q) &  \ q\leqslant 17  & 1 &   \cite{bakbari, amr} \\[0.1cm]
U_4(7) &   & 1 &   \cite{amr} \\[0.1cm]
L_n(2) & \ n=p \ {\rm or} \ p+1, \ {\rm for \ which} \ 2^p-1 \ {\rm is \ a \ prime} & 1 & \cite{amr} \\[0.2cm]
L_n(2) & \ n=9, 10, 11  & 1 &   \cite{khoshravi}, \cite{R-M} \\[0.1cm]
U_6(2) & & 1 & \cite{LShi} \\[0.1cm]

R(q) & \ |\pi(q\pm \sqrt{3q}+1)|=1, \ q=3^{2m+1}, \ m\geqslant 1 & 1 & \cite{mzd} \\[0.2cm]
{\rm Sz} (q) & \ q=2^{2n+1}\geqslant 8& 1 &   \cite{mz1}, \cite{mzd} \\[0.2cm]
B_m(q), C_m(q) &  m=2^f\geqslant 4,  \
|\pi\big((q^m+1)/2\big)|=1, \
 & 2 & \cite{akbarim}\\[0.2cm]
B_2(q)\cong C_2(q) &  \ |\pi\big((q^2+1)/2\big)|=1, \ q\neq
3 & 1 & \cite{akbarim}\\[0.2cm]
B_m(q)\cong C_m(q) &  m=2^f\geqslant 2, \ 2|q, \
|\pi\big(q^m+1\big)|=1, \ (m, q)\neq (2, 2) & 1 &
\cite{akbarim}\\[0.2cm]
B_p(3), C_p(3) &  |\pi\big((3^p-1)/2\big)|=1, \  p \ {\rm is \ an
\ odd \
prime}  & 2 & \cite{akbarim}, \cite{mzd}\\[0.2cm]
B_3(5), C_3(5) & & 2 & \cite{akbarim} \\[0.2cm]
C_3(4) & & 1 & \cite{moghadam} \\[0.1cm]
S &  \ \mbox{A sporadic simple group} & 1 & \cite{mzd} \\[0.1cm]
S &  \ \mbox{A simple group with} \ |\pi(S)|=4, \ \ S\neq \mathbb{A}_{10} & 1 & \cite{zs} \\[0.1cm]
S &  \  \mbox{A simple group with} \ |S|\leqslant 10^8, \ \ S\neq \mathbb{A}_{10}, \ U_4(2) & 1 & \cite{ls} \\[0.1cm]
S &  \  \mbox{A simple $C_{2,2}$- group} & 1 & \cite{mz1}
\end{array}$
\end{center}
\footnotetext{In Table 3, $q$ is a power of a prime number.}

Although we have not found a simple group which is $k$-fold
OD-characterizable for $k\geqslant 3$, but among non-simple
groups, there are many groups which are $k$-fold
OD-characterizable for $k\geqslant 3$. As an easy example, if $P$
is a $p$-group of order $p^n$, then $h_{\rm OD}(P)=\nu(p^n)$,
where $\nu(m)$ signifies the number of non-isomorphic groups of
order $m$. Table 2 lists finite non-solvable groups which are
currently known to be OD-characterizable or $k$-fold
OD-characterizable with $k\geqslant 2$.

\begin{center}
{\bf Table 2}. Some non-solvable groups $G$ with known $h_{\rm
OD}(G)$.\\[0.2cm]
$\begin{array}{l|l|c|l} \hline G & {\rm Conditions \ on} \ G &
h_{\rm OD}(G) & {\rm Refs.} \\ \hline
{\rm Aut}(M) & M \ \mbox{is a sporadic group}  \neq  J_2, M^cL    & 1 & \cite{mz1} \\[0.1cm]
\mathbb{S}_n & n=p, \ p+1 \ (p\geqslant 5 \ \mbox{is a prime})& 1 &  \cite{mz1}    \\[0.1cm]
U_3(5): 2 & & 1 & \cite{zs-new5} \\[0.1cm]
U_6(2): 2 & &1& \cite{LShi}\\[0.1cm]
M &  M\in \mathcal{C}_1 & 2 &      \cite{mz2}       \\[0.1cm]
M & M\in \mathcal{C}_2 & 8 &      \cite{mz2}       \\[0.1cm]
M & M\in \mathcal{C}_3  & 3 & \cite{hm, kogani, banoo-alireza, mz3, yan-chen} \\[0.1cm]
M & M\in \mathcal{C}_4     & 2 & \cite{mz2} \\[0.1cm]
M & M\in \mathcal{C}_5    & 3 & \cite{mz2} \\[0.1cm]
M & M\in \mathcal{C}_6  & 6 &\cite{banoo-alireza} \\[0.1cm]
M & M\in \mathcal{C}_7 & 1 &  \cite{zs-new1} \\[0.1cm]
M & M\in \mathcal{C}_8  & 9 &\cite{zs-new1} \\[0.1cm]
M & M\in \mathcal{C}_{9} & 3 &  \cite{zs-new5} \\[0.1cm]
M & M\in \mathcal{C}_{10}  & 6 &\cite{zs-new5}\\[0.1cm]
M & M\in \mathcal{C}_{11}  & 3 &\cite{LShi}\\[0.1cm]
M & M\in \mathcal{C}_{12}  & 5 &\cite{LShi}\\[0.1cm]
M & M\in \mathcal{C}_{13}  & 1 &\cite{y-chen-w}\\[0.1cm]
M & M\in \mathcal{C}_{14}  & 1 &\cite{R-M}
\end{array}$
\end{center}
\begin{tabular}{lll}
$\mathcal{C}_1$ & $\!\!\!\!=$  & $ \!\!\!\! \{\mathbb{A}_{10}, J_2\times \mathbb{Z}_{3} \}$\\[0.1cm]
$\mathcal{C}_2$ & $\!\!\!\!=$ & $\!\!\!\! \{ \mathbb{S}_{10},  \
\mathbb{Z}_{2}\times \mathbb{A}_{10}, \ \mathbb{Z}_2\cdot
\mathbb{A}_{10}, \ \mathbb{Z}_6\times J_2,  \ \mathbb{S}_3 \times
J_2, \ \mathbb{Z}_3\times (\mathbb{Z}_2\cdot J_2),
$ \\[0.1cm]
& & $(\mathbb{Z}_3\times J_2)\cdot \mathbb{Z}_2, \ \mathbb{Z}_3\times {\rm Aut}(J_2)\}.$\\[0.1cm]
$ \mathcal{C}_3$ & $\!\!\!\!=$ & $\!\!\!\! \{\mathbb{S}_{n}, \
\mathbb{Z}_{2}\cdot \mathbb{A}_{n}, \ \mathbb{Z}_{2}\times
\mathbb{A}_{n}\}$, \
where $9\leqslant n\leqslant 100$ with $n\neq 10, p, p+1$ ($p$ a prime)\\[0.1cm]
& & or $n=106, \ 112$.\\[0.1cm]
$\mathcal{C}_4$ & $\!\!\!\!=$ & $\!\!\!\! \{ {\rm Aut}(M^cL), \ \mathbb{Z}_2\times M^cL\}$.\\[0.1cm]
$\mathcal{C}_5$ & $\!\!\!\!=$ & $\!\!\!\! \{{\rm Aut}(J_2), \
\mathbb{Z}_2\times J_2, \
\mathbb{Z}_2\cdot J_2\}.$\\[0.1cm]
$\mathcal{C}_6$ & $\!\!\!\!=$ & $\!\!\!\! \{{\rm Aut}(S_6(3)), \
\mathbb{Z}_2\times S_6(3), \  \mathbb{Z}_2\cdot S_6(3),  \
\mathbb{Z}_2\times O_7(3), \
\mathbb{Z}_2\cdot O_7(3)$, \ ${\rm Aut}(O_7(3))\}$.\\[0.1cm]
$\mathcal{C}_7$ & $\!\!\!\!=$ & $\!\!\!\! \{L_2(49):2_1, \
L_2(49):2_2, \
L_2(49):2_3\}$.\\[0.1cm]
$\mathcal{C}_8$ & $\!\!\!\!=$ & $\!\!\!\! \{L\cdot 2^2,  \
\mathbb{Z}_{2}\times (L: 2_1), \ \mathbb{Z}_2\times (L: 2_2), \
\mathbb{Z}_2\times (L: 2_3),  \ \mathbb{Z}_2\cdot(L: 2_1), $
\\[0.1cm]
& & $\mathbb{Z}_2\cdot(L: 2_2), \ \mathbb{Z}_2\cdot(L: 2_3), \
\mathbb{Z}_4\times L, \ (\mathbb{Z}_2\times \mathbb{Z}_2)\times
L\}$, \ where $L=L_2(49)$.\\[0.1cm]
$\mathcal{C}_{9}$ & $\!\!\!\!=$  & $ \!\!\!\! \{U_3(5):3, \
\mathbb{Z}_{3} \times U_3(5),
\ \mathbb{Z}_{3}\cdot U_3(5) \}$\\[0.1cm]
$\mathcal{C}_{10}$ & $\!\!\!\!=$  & $ \!\!\!\! \{L:\mathbb{S}_3, \
\mathbb{Z}_{2}\cdot(L:3), \ \mathbb{Z}_{3}\times (L:2), \
\mathbb{Z}_{3}\cdot(L:2), \ (\mathbb{Z}_{2}\times L): 2$, \
$(\mathbb{Z}_{3}\cdot L): 2\},$
\\[0.1cm]
& & where
$L=U_3(5)$.\\[0.1cm]
$\mathcal{C}_{11}$ & $\!\!\!\!=$  & $ \!\!\!\! \{U_6(2): 3, \
\mathbb{Z}_{3}\times U_6(2), \ \mathbb{Z}_{3}\cdot U_6(2)\}$.\\[0.1cm]
$\mathcal{C}_{12}$ & $\!\!\!\!=$  & $ \!\!\!\! \{L\cdot
\mathbb{S}_3, \ \mathbb{Z}_{3}\times (L:2), \ \mathbb{Z}_{3}\cdot
(L:2), \ (\mathbb{Z}_{3}\times L):2, \ (\mathbb{Z}_{3}\cdot
L):2\}$, where
$L=U_6(2)$.\\[0.1cm]
$\mathcal{C}_{13}$ & $\!\!\!\!=$  & $ \!\!\!\! \{{\rm
Aut}(O^+_{10}(2), \  {\rm Aut}(O^-_{10}(2)\}$,
\\[0.1cm]
$\mathcal{C}_{14}$ & $\!\!\!\!=$  & $ \!\!\!\! \{{\rm
Aut}(L_{p}(2)), \ {\rm Aut}(L_{p+1}(2))\}$, where $2^p-1$ is a
Mersenne prime.
\\[0.1cm]
\end{tabular}

\end{document}